\newtheorem{theorem}{Theorem}[section]
\newtheorem{proposition}[theorem]{Proposition}
\theoremstyle{definition}
\newtheorem{definition}[theorem]{Definition}
\newtheorem{example}[theorem]{Example}
\theoremstyle{remark}
\newtheorem{remark}[theorem]{Remark}
\title{Operator algebras associated with graphs and categories of paths: a Survey}
\author[Bukoski]{Juliana Bukoski}
\address{Department of Mathematics, Physics, and Computer Science, Georgetown college, 400 East College Street, Georgetown, KY, USA, 40324.}
\email{juliana\_bukoski@georgetowncollege.edu}
\author[Singla]{Sushil Singla}
\address{Department of Mathematics, Shiv Nadar University Delhi NCR, Tehsil Dadri, Greater Noida, Uttar Pradesh, India.}
\email{ss774@snu.edu.in}
\subjclass[2020]{46L05, 20L05}
\keywords{Graph $C^*$-algebras, Free semigroupoid algebras, Higher rank graphs, category of paths}
\begin{document}

\maketitle

\begin{abstract}
Many interesting examples of operator algebras, both self-adjoint and non-self-adjoint, can be constructed from directed graphs. In this survey, we overview the construction of $C^*$-algebras from directed graphs and from two generalizations of graphs: higher rank graphs and categories of paths. We also look at free semigroupoid algebras generated from graphs and higher rank graphs, with an emphasis on the left regular free semigroupoid algebra. We give examples of specific graphs and the algebras they generate, and we discuss properties such as semisimplicity and reflexivity. Finally, we propose a new construction: applying the left regular free semigroupoid construction to categories of paths.
\end{abstract}

\section{Introduction}\label{intro}

A directed graph is a set of vertices along with a set of edges, where each edge has a source vertex and a range vertex.  Such a graph can be represented by a collection of operators on a Hilbert space $\mathcal{H}$; each vertex is associated to a projection, and each edge is associated to a partial isometry that maps between the subspaces corresponding to its source and range vertices. These projections and partial isometries are used to construct a $C^*$-algebra called the \textit{graph $C^*$-algebra} of the directed graph. There are many examples of common $C^*$-algebras which can be realized as graph algebras, and many properties of graph algebras are determined by structural properties of the graph. 

Free semigroupoid algebras generated by directed graphs are a class of non-self-adjoint operator algebras introduced by Kribs and Power \cite{kribs.power.2004}. The construction of these algebras from a graph is similar to the graph $C^*$-algebra construction in that vertices are represented by projections and edges by partial isometries. However, a free semigroupoid algebra is closed in the weak operator topology and does not (necessarily) include adjoints. As in the graph $C^*$-algebra case, many previously-studied non-self-adjoint operator algebras can be expressed as free semigroupoid algebras for some directed graph, and many properties of the algebra correspond to properties of the graph. In fact, this relationship is in some sense stronger than the self-adjoint case; while it is possible to find two non-isomorphic graphs that produce the same graph $C^*$-algebra, Kribs and Power \cite{kribs.power.2004} showed that two free semigroupoid algebras from graphs are unitarily equivalent if and only if their corresponding graphs are isomorphic.

Both the $C^*$-algebra and the free semigroupoid algebra construction have been extended to higher rank graphs, which are a generalization of graphs where edges have length in $\mathbb{N}^k$ instead of $\mathbb{N}$; higher rank graphs can be thought of as graphs where certain paths are identified, according to a factorization property. Categories of paths are another generalization of graphs, introduced by Spielberg \cite{spielberg}, which allow identifications under conditions less restrictive than the higher rank graph factorization property. Spielberg has defined $C^*$-algebras from categories of paths and considered some properties of these algebras \cite{spielberg}.

    
In Section \ref{section1} of this paper, we give some basic definitions. In Section \ref{section2}, we outline the graph $C^*$-algebra and free semigroupoid algebra constructions and discuss some of the properties of these operator algebras. In Section \ref{section3}, we overview the higher rank graphs of Kumjian and Pask and the analogous operator algebras associated to them. Finally, in Section \ref{section4}, we discuss Spielberg's $C^*$-algebra construction and results for categories of paths, and we define and propose some results for free semigroupoid algebras from categories of paths. The main goal of this survey article is to bring all work related to operator algebras associated with graphs and categories of paths under one roof, as well as introduce free semigroupoid algebras from categories of paths. Throughout the paper, we give examples of specific graphs, higher rank graphs, and categories of paths, along with the operator algebras they generate.

\section{Definitions}\label{section1}

Let $G=(G^0, G^1, r, s)$ be a directed graph consisting of countable sets $G^0$ and $G^1$ and functions $r, s: G^1\rightarrow G^0$. The elements of $G^0$ are called \emph{vertices}, and the elements of $G^1$ are called \emph{edges}. For each edge $e\in G^1$, $s(e)$ and $r(e)$ are called the \emph{source} and \emph{range} of $e$, respectively. A \emph{source} is a vertex that receives no edges. 

A \emph{path of length $n$} in $G$ is a sequence of edges $\mu = \mu_1\mu_2\dots\mu_n$ such that $s(\mu_i)=r(\mu_{i+1})$ for all $1\leq i\leq n-1.$ We write $|\mu| = n$. Notice that we concatenate paths from right to left, to be consistent with composition of the operators that will be associated with these edges. Likewise, we define the range and source of a path $\mu = \mu_1\mu_2\dots\mu_n$ by $r(\mu)=r(\mu_1)$ and $s(\mu)=s(\mu_n)$ for $|\mu|>1$, and $r(v) = v = s(v)$ for all $v\in G^0$.

We often represent graphs with diagrams where either a dot or the name of the vertex represents each vertex, and an arrow (possibly with a label) represents each edge. For example, the graph with two vertices $v_1$ and $v_2$, and two edges $e$ with $s(e) = r(e) = v_1$ and $f$ with $s(f) = v_1$ and $r(f) = v_2$, can be represented as:
\begin{center}
\begin{tikzpicture}[->,>=stealth',shorten >=1pt,auto,node distance=3cm,
                    thick,main node/.style={circle}]

  \node[main node] (1) {$v_1$};
\node[] at (2,0) (2) {$v_2$};

  \path[->,draw,thick]
        (1) edge [loop left] node {$e$} (1);

    \path[->,draw,thick]
    (1) edge node {$f$} (2);

\end{tikzpicture}
\end{center}

We will not generally require graphs to be finite; for example, the graph represented by

\begin{center}
\begin{tikzpicture}[->,>=stealth',shorten >=1pt,auto,node distance=3cm,
                    thick,main node/.style={circle}]

  \node[main node] (1) {$v_1$};
\node[] at (2,0) (2) {$v_2$};
\node[] at (4,0) (4) {\dots};
\node[] at (6,0) (3) {$v_n$};
\node[] at (8,0) (5) {\dots};

  \path[->,draw,thick]
        (1) edge node {} (2);
  
    \path[->,draw,thick]
        (2) edge node {} (4);
        \path[->,draw,thick]
        (4) edge node {} (3);
        \path[->,draw,thick]
        (3) edge node {} (5);

\end{tikzpicture}
\end{center}
has an infinite number of both vertices and edges. However, we will often be interested in graphs where each vertex receives at most finitely many edges; that is, $\{ e \in G^1 : r(e) = v\}$ is finite for all $v \in G^0$. Such a graph is called \emph{row-finite}.

In general, the process for defining an operator algebra from a graph starts with assigning projections to each vertex and partial isometries to each edge. Let $\mathcal{H}$ be a Hilbert space and $\mathcal B(\mathcal H)$ the space of bounded linear operators on $\mathcal H$.

\begin{definition}\label{defn2.1}
Let $G$ be a row-finite directed graph. A \emph{Cuntz-Krieger $G$-family} in $\mathcal{H}$ is a family $\{S, P\}$ of partial isometries $\{S_e : e \in G^1\}$ and pairwise orthogonal projections $\{P_v : v \in G^0\}$ on $\mathcal{H}$ satisfying
\begin{enumerate}
    \item[] (I) $S_e^*S_e = P_{s(e)}$ for all $e \in G^1$
    \item[] (CK) $P_v = \sum\limits_{e \in G^1: r(e) = v} S_eS_e^*$ for each $v \in G^0$ that is not a source.
\end{enumerate}
Note that the sum in (CK) is finite because the graph is assumed to be row-finite. 

A \emph{Cuntz-Krieger-Toeplitz (CKT) $G$-family} satisfies (I) as well as
\begin{enumerate}
    \item[] (CKT) $P_v < \sum\limits_{e \in G^1: r(e) = v} S_eS_e^*$ for each $v \in G^0$ that is not a source.
\end{enumerate}
\end{definition}

Conditions (I) and (CK) are called the \emph{Cuntz-Krieger relations}, and Conditions (I) and (CKT) are called the \emph{Cuntz-Krieger-Toeplitz relations}. Note that Condition (I) implies that $S_e$ is a partial isometry with initial space $P_{s(e)}\mathcal{H}$. Additionally, (CK) and (CKT) imply that the partial isometries $S_e$ associated to the edges $e$ with $r(e) = v$ have mutually orthogonal ranges which are closed subspaces of $P_{r(e)}\mathcal{H}$.

In Section \ref{section2}, we will use these families of isometries to define the graph $C^*$-algebra and the free semigroupoid algebra of a graph. First, however, we define some properties of operator algebras that we will explore for the algebras discussed in this survey.

An operator $T \in \mathcal{B}(\mathcal{H})$ is called \emph{nilpotent} if $T^n = 0$. We say that $T$ is \emph{quasinilpotent} if the spectrum of $T$ is 0, or, equivalently, if $\lim\limits_{n \to \infty} \|T^n\|^{1/n} = 0$. The \emph{Jacobson radical} $\text{rad}(\mathcal{A})$ of a Banach algebra $\mathcal{A}$ is the intersection of the kernels of all algebraically irreducible representations. An algebra $\mathcal{A}$ is called \emph{semisimple} if $\text{rad}(\mathcal{A}) = 0$. It is a well-known fact (for example, Theorem 2.3.5(ii) in \cite{rickart}), that the Jacobson radical of an algebra of operators is the largest quasinilpotent ideal in the algebra.

The \emph{commutant} $\mathcal{A}'$ of an algebra $\mathcal{A}$ is the set of all operators in $\mathcal{B}(\mathcal{H})$ that commute with all operators in $\mathcal{A}$. The \emph{bicommutant} of $\mathcal{A}$ is the commutant of $\mathcal{A}'$. We will look at when the bicommutant of an algebra is equal to the algebra itself. The famous von Neumann Bicommutant Theorem states that if we have an algebra $\mathcal A$ consisting of bounded operators on a Hilbert space that contains the identity operator and is closed under taking adjoints, then the closures of $\mathcal A$ in the weak operator topology and the strong operator topology are equal, and are in turn equal to the bicommutant $\mathcal A''$ of $\mathcal A$ (see Corollary 3.3 of \cite{stratila.zsido}). Although the free semigroupoid algebras that we will be examining may not be von Neumann algebras, and will just be weak operator topology (WOT)-closed algebras, they still satisfy that the bicommutant is equal to itself, as we will see in Section \ref{section2}.

Finally, for the non-self-adjoint algebras, we will be interested in whether or not the algebra is reflexive and hyper-reflexive. Roughly speaking, a reflexive algebra is one that can be characterized by its invariant subspaces. A subspace $M$ of a Hilbert space $\mathcal{H}$ is \emph{invariant} for an operator $A \in \mathcal{B}(\mathcal{H})$ if $A(M) \subseteq M$. For a subalgebra $\mathcal{A}$ of $\mathcal{B}(\mathcal{H})$, the set of all subspaces that are invariant for all operators in $\mathcal{A}$ forms a lattice, written $\text{Lat}(\mathcal{A})$. The set of all operators in $\mathcal{B}(\mathcal{H})$ for which all subspaces in $\text{Lat}(\mathcal{A})$ are invariant forms an algebra, written $\text{Alg Lat}(\mathcal{A})$. It is immediate that $\mathcal{A} \subseteq \text{Alg Lat}(\mathcal{A})$. When the opposite containment holds, $\mathcal{A}$ is called \emph{reflexive}. The notion of reflexivity was introduced by Radjavi and Rosenthal in \cite{radjavi.rosenthal}, and terminology was suggested by Halmos in \cite{halmos, halmos1}.

All von Neumann algebras are reflexive algebras (see Theorem 9.17 of \cite{radjavi}). Since all reflexive operator algebras are weakly closed subalgebras containing the identity, it follows that von Neumann algebras are precisely the self-adjoint reflexive operator algebras. Another important class of examples of reflexive algebras are nest algebras, which in finite dimensions are algebras of upper triangular matrices. These were introduced by Ringrose in \cite{ringrose} as an example of reflexive operator algebras. For more examples of  reflexive algebras, see \cite{arias.popescu.1995, helmer, murray, peligrad1, peligrad2, hyper3}. 

Hyper-reflexivity, defined by Arveson in \cite{arveson}, is a stronger condition than reflexivity and is defined as follows. Let $\mathcal{L} = \text{Lat}(\mathcal{A})$. Then $\mathcal{L}$ determines a seminorm on $\mathcal{B}(\mathcal{H})$ by
\[ \beta_{\mathcal{L}}(T) = \sup\limits_{L \in \mathcal{L}} \|P_L^\perp T P_L\|,\]
where $P_L$ is the projection onto the subspace $L$. Then $\beta_\mathcal{L}(T) = 0$ if $T \in \text{Alg}(\mathcal{L})$. Thus, $\mathcal A$ is reflexive if and only if $$\mathcal A = \{T\in\mathcal B(\mathcal H): \beta_{\mathcal{L}}(T) =0\}.$$ Note that this is equivalent to : For every $T\in\mathcal B(\mathcal H)$, there exists a constant $C_T$ such that $\text{dist}(T, \text{Alg}(\mathcal{L}))\leq C_T \beta_{\mathcal L}(T)$. Moreover, we have
\[ \beta_\mathcal{L}(T) \leq \text{dist}(T, \text{Alg}(\mathcal{L}))\]
for all $T \in \mathcal{B}(\mathcal{H})$. The algebra is said to be \emph{hyper-reflexive} if these norms are comparable, and the constant of reflexivity is the smallest value of $C$ such that 
\[ \text{dist}(T, \text{Alg}(\mathcal{L})) \leq C \beta_{\mathcal L}(T)\]
Clearly, all hyper-reflexive operator algebras are reflexive operator algebras. But converse may not be true, see \cite{ reflexivecounter2, reflexivecounter1}. Nest algebras are also hyper-reflexive, with distance constant one (see Theorem 1.1 of \cite{arveson}). For more about nest algebras and hyper-reflexivity, we refer readers to \cite{tenlectures, nestalgebras, distanceformulae}. For more examples of hyper-reflexive algebras, see \cite{hyper2, davidson.1987, hyper6, hyper5, hyper1}. 


\section{Graph $C^*$-algebras and Free Semigroupoid Algebras}\label{section2}

We begin this section by looking at $C^*$-algebras generated from graphs. The $C^*$-algebra generated by a Cuntz-Krieger family $\{S, P\}$ is written $C^*(S,P)$. The \emph{graph $C^*$-algebra} of graph $G$, written as $C^*(G)$, is defined as the universal $C^*$-algebra generated by families of isometries satisfying the Cuntz-Krieger relations. The graph $C^*$-algebra $C^*(G)$ always exists and is unique up to isomorphism sending generators to generators (see Proposition 1.21 and Corollary 1.22 of \cite{raeburn}). A good introduction to graph $C^*$-algebras is Raeburn's book \cite{raeburn}. 

We mention here an important result called the Cuntz-Krieger Uniqueness Theorem that is useful in finding graph $C^*$-algebras. A \emph{cycle} is a path $\mu = \mu_1 \mu_2 \dots \mu_n$ with $n \geq 1$, $s(\mu_n) = r(\mu_1)$, and $s(\mu_i)\neq s(\mu_j)$ for all $i\neq j$. An edge $e$ is an \emph{entry} to the cycle $\mu$ if there exists $i$ such that $r(e) = r(\mu_i)$ and $e\neq \mu_i$.

\begin{theorem}[\cite{rowfinite}, Theorem 3.1] Suppose $G$ is a row-finite directed graph in which every cycle has an entry, and $\{T,Q\}$ is a Cuntz-Krieger $G$-family in a Hilbert space $\mathcal H$ such that $Q_v\neq 0$ for every $v\in G^0$. Then $C^*(T,Q)$ is isomorphic to $C^*(G)$.
\end{theorem}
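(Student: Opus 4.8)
The plan is to produce a $*$-homomorphism $\pi \colon C^*(G) \to C^*(T,Q)$ sending the universal generators to $T$ and $Q$, and then show that $\pi$ is injective. Existence of $\pi$ (and surjectivity onto $C^*(T,Q)$) is immediate from the universal property of $C^*(G)$, since $\{T,Q\}$ is by hypothesis a Cuntz-Krieger $G$-family. So the entire content is injectivity, and the hypotheses "every cycle has an entry" and "$Q_v \neq 0$ for all $v$" are exactly what must be fed into the injectivity argument.

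The first step I would take is to set up the standard structural description of $C^*(G)$ via paths. For a path $\mu = \mu_1\cdots\mu_n$ write $s_\mu = s_{\mu_1}\cdots s_{\mu_n}$; using relation (I) one shows $s_\mu^* s_\nu$ is nonzero only in constrained situations and that $C^*(G) = \overline{\operatorname{span}}\{ s_\mu s_\nu^* : s(\mu) = s(\nu)\}$. Next I would exhibit the gauge action: the universal property gives a strongly continuous action $\gamma$ of the circle $\mathbb{T}$ on $C^*(G)$ with $\gamma_z(s_e) = z\,s_e$ and $\gamma_z(p_v) = p_v$, and averaging over this action produces a faithful conditional expectation $\Phi$ from $C^*(G)$ onto the fixed-point algebra $C^*(G)^\gamma$, which is the closed span of the "diagonal" elements $s_\mu s_\nu^*$ with $|\mu| = |\nu|$. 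The injectivity of $\pi$ then reduces, via a now-routine argument using faithfulness of $\Phi$, to showing that $\pi$ is isometric (equivalently injective) on the core $C^*(G)^\gamma$.

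The heart of the matter, and the step I expect to be the main obstacle, is analyzing the core algebra and using the cycle hypothesis. The core $C^*(G)^\gamma$ is an AF algebra, realized as an increasing union of finite-dimensional pieces indexed by path length, each a direct sum of matrix algebras over corners $p_v$ (here row-finiteness keeps these sums finite). On each such finite-dimensional block $\pi$ restricts to a $*$-homomorphism, and a $*$-homomorphism out of a direct sum of matrix algebras is injective as soon as it is nonzero on each summand; since each summand contains a projection dominated by some $p_v$ and $\pi(p_v) = Q_v \neq 0$, one gets injectivity on each block, hence on the core, hence on all of $C^*(G)$ — \emph{provided} the core really is AF with this block structure. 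This is where "every cycle has an entry" enters: without it, a cycle with no entry forces the relevant corner of the core to be a matrix algebra over $C(\mathbb{T})$ (a copy of the continuous functions on the circle appears), which is not finite-dimensional, the reduction to matrix summands breaks, and $\pi$ can genuinely fail to be injective (one can lose the point spectrum of the cyclic shift). So the technical work is: (i) prove that when every cycle has an entry, the corner $p_v C^*(G)^\gamma p_v$ at each vertex is an increasing union of finite-dimensional algebras — this uses an argument that any sufficiently long return path to $v$ must "branch" because of the entry, so there is no single circulating projection — and (ii) push the vertex-wise nonvanishing $Q_v \neq 0$ through the inductive limit. Steps (i) and (ii) are exactly the content one cites from \cite{rowfinite}, Theorem 3.1; I would present (i) as the key lemma and treat the gauge-invariant-uniqueness reduction and the AF injectivity criterion as the surrounding scaffolding.
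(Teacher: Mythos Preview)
The paper is a survey and does not prove this theorem; it states the result with a citation to \cite{rowfinite}, so there is no in-paper argument to compare against. That said, your outline has a genuine gap, and it sits exactly where you invoke the entry hypothesis.

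The reduction ``injectivity on the core $\Rightarrow$ injectivity of $\pi$'' does \emph{not} follow from faithfulness of $\Phi$ on $C^*(G)$ alone. That argument needs a compatible expectation $\Psi$ on the target satisfying $\Psi\circ\pi=\pi\circ\Phi$, so that $\pi(a)=0$ with $a\ge 0$ forces $\pi(\Phi(a))=\Psi(\pi(a))=0$. When $C^*(T,Q)$ carries a gauge action intertwined by $\pi$ you get such a $\Psi$ for free --- but that is exactly the Gauge-Invariant Uniqueness Theorem, a separate statement the paper cites alongside this one. Here $C^*(T,Q)$ has no gauge action a priori, and manufacturing the reduction is the whole content of the theorem. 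Relatedly, your diagnosis of where the entry condition enters is incorrect: the core $C^*(G)^\gamma$ is \emph{always} AF for row-finite $G$, with or without entries (for the single loop with no entry, $C^*(G)\cong C(\mathbb T)$ and the core is just $\mathbb C$). So the entry condition is not what makes the core tractable.

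In the standard proof (as in \cite{rowfinite} or Raeburn's book) the entry condition is used instead to build a norm estimate that substitutes for the missing $\Psi$. For a finite linear combination $a$ of the $s_\mu s_\nu^*$, one uses the entries to choose a long path $\alpha$ avoiding periodicity so that compression by the projection $q=s_\alpha s_\alpha^*$ annihilates the off-diagonal terms $s_\mu s_\nu^*$ with $|\mu|\ne|\nu|$ and yields $\|qaq\|$ close to $\|\Phi(a)\|$. Since $\pi(q)$ is a projection and $\pi$ is isometric on the core (your argument for that part, via $Q_v\ne 0$ and the matrix-block structure, is fine), one obtains $\|\Phi(a)\|\le\|\pi(a)\|$ on a dense subalgebra, and injectivity of $\pi$ follows. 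That compression lemma is the key idea you are missing.
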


As stated in Remark 2.17 of \cite{raeburn}, the above theorem for finite graphs follows as a special case of Theorem 2.13 of \cite{cuntz}. Thus, the above theorem is essentially due to Cuntz and Krieger. The condition `every cycle has an entry' was introduced by Kumjian, Pask, and Raeburn in \cite{entry} and the version stated above was first proved as Theorem 3.1 of \cite{rowfinite}. Another important uniqueness result is the Gauge Invariant Uniqueness Theorem (see Theorem 2.1 of \cite{rowfinite}).

We now give a few examples of directed graphs and the graph $C^*$-algebras they generate. 

\begin{example}\label{singleloop} (\cite{raeburn}, Example 1.23) Let $G$ be the graph with one vertex $x$ and one edge $e$. A Cuntz-Krieger $G$-family $\{S, P\}$ on $\mathcal H$ must satisfy $S_e^*S_e = P_v = S_eS_e^*$. Thus, any Cuntz-Krieger $G$-family consists of a unitary and the identity. So $C^*(G)$ is the universal $C^*$-algebra generated by a unitary, and so is isomorphic to $C(\mathbb{T})$, the complex-valued continuous functions on the unit circle $\mathbb{T}$ in $\mathbb R^2$.
\end{example}

\begin{example} If $G$ is the graph with one vertex and $n \geq 2$ edges, then any Cuntz-Krieger $G$-family $\{P,S\}$ satisfies $P = I$ and $\sum\limits_{i=1}^n S_i = I$, and so $C^*(G)$ is the Cuntz algebra $\mathcal{O}_n$.  For a detailed study of $\mathcal O_n$, see \cite{On}.
\end{example}

\begin{example} Consider the graph $G$ with $n$ vertices $x_1, \dots, x_n$ and $n-1$ edges $e_1, \dots, e_{n-1}$ satisfying $s(e_j) = x_j$ and $r(e_j) = x_{j+1}$:

\begin{center}
\begin{tikzpicture}[->,>=stealth',shorten >=1pt,auto,node distance=3cm,
                    thick,main node/.style={circle}]

  \node[main node] (1) {$x_1$};
\node[] at (2,0) (2) {$x_2$};
\node[] at (4,0) (4) {\dots};
\node[] at (6,0) (3) {$x_n$};

  \path[->,draw,thick]
        (1) edge node {$e_1$} (2);
  
    \path[->,draw,thick]
        (2) edge node {$e_2$} (4);
        \path[->,draw,thick]
        (4) edge node {$e_{n-1}$} (3);
       
\end{tikzpicture}
\end{center}
Let $H = \mathbb{C}^n$ with basis $\{b_1, \dots b_n$\}. Let $P_{e_j}$ be the matrix unit $E_{jj}$, and let $S_{e_j}$ be the matrix unit $E_{j,j+1}$
Then $\{S, P\}$ is a Cuntz-Krieger $G$-family with $C^*(S,P) = M_n(\mathbb{C})$. Since $G$ has no cycles, the Cuntz-Krieger Uniqueness Theorem applies, giving us that $C^*(G)$ is isomorphic to $M_n(\mathbb{C})$.
\end{example}
    
\begin{example}\label{compacts} Consider the graph $G$ with infinite vertices $\{x_n\}_{n\in \mathbb{N}}$ and infinite edges $e_n$ satisfying $s(e_n) = x_n$ and $r(e_n) = x_{n+1}$:
        \begin{center}
\begin{tikzpicture}[->,>=stealth',shorten >=1pt,auto,node distance=3cm,
                    thick,main node/.style={circle}]

  \node[main node] (1) {$x_1$};
\node[] at (2,0) (2) {$x_2$};
\node[] at (4,0) (4) {\dots};
\node[] at (6,0) (3) {$x_n$};
\node[] at (8,0) (5) {\dots};

  \path[->,draw,thick]
        (1) edge node {$e_1$} (2);
  
    \path[->,draw,thick]
        (2) edge node {$e_2$} (4);
        \path[->,draw,thick]
        (4) edge node {$e_{n-1}$} (3);
       \path[->,draw,thick]
        (3) edge node {$e_{n}$} (5);

\end{tikzpicture}
\end{center}

Let $\mathcal{H} = \ell^2$ with basis $\{b_n\}_{n \in \mathbb{N}}$. Let $P_{x_n}$ be the projection onto span$\{b_n\}$ and let $S_{e_n}$ be given by
\[ S_{e_n}(b_m) = \left\{ \begin{array}{cc} b_{m+1} & \text{if $m=n$} \\ 0 & \text{else} \end{array} \right. .\]
Then $\{S, P\}$ is a Cuntz-Krieger $G$-family with $C^*(S,P)$ equal to the compact operators on separable Hilbert space $\mathcal{K}(\mathcal{H})$. Since $G$ has no cycles, the Cuntz-Krieger Uniqueness Theorem applies, giving us that $C^*(G)$ is isomorphic to $\mathcal{K}(\mathcal{H})$.

Note that a similar argument shows that the graph
        \begin{center}
\begin{tikzpicture}[->,>=stealth',shorten >=1pt,auto,node distance=3cm,
                    thick,main node/.style={circle}]

  \node[main node] (1) {$x_1$};
\node[] at (2,0) (2) {$x_2$};
\node[] at (4,0) (4) {\dots};
\node[] at (6,0) (3) {$x_n$};
\node[] at (8,0) (5) {\dots};

  \path[->,draw,thick]
        (2) edge node {$e_1$} (1);
  
    \path[->,draw,thick]
        (4) edge node {$e_2$} (2);
        \path[->,draw,thick]
        (3) edge node {$e_{n-1}$} (4);
       \path[->,draw,thick]
        (5) edge node {$e_{n}$} (3);

\end{tikzpicture}
\end{center}
also has graph $C^*$-algebra isomorphic to $\mathcal{K}(\mathcal{H})$.
\end{example}

\begin{example}\label{toeplitz} Consider the graph $G$ with two vertices $v_1$ and $v_2$, and two edges, $e$ with $s(e) = r(e) = v_1$ and $f$ with $s(f) = v_1$ and $r(f) = v_2$:
    \begin{center}
\begin{tikzpicture}[->,>=stealth',shorten >=1pt,auto,node distance=3cm,
                    thick,main node/.style={circle}]

  \node[main node] (1) {$v_1$};
\node[] at (2,0) (2) {$v_2$};

  \path[->,draw,thick]
        (1) edge [loop left] node {$e$} (1);

    \path[->,draw,thick]
    (1) edge node {$f$} (2);

\end{tikzpicture}
\end{center}
A Cuntz-Krieger $G$ family $\{S, P\}$ must satisfy $S_e^*S_e = P_{v_1} = S_f^*S_f$, $P_{v_1} = S_eS_e^*$, and $P_{v_2} = S_fS_f^*$. Note that $(S_e + S_f)(S_e + S_f)^* = I$, $(S_e + S_f)^*(S_e + S_f) = 2P_{v_1}$ and $I - P_{v_1} = P_{v_2}$. So any Cuntz-Krieger $G$-family is generated by $S_e + S_f$, which is a non-unitary isometry when $P_{v_2} \neq 0$. Thus, $C^*(G)$ is unitarily equivalent to the Toeplitz algebra.
\end{example}

\begin{example}
Let $C_n$ be the graph with $n$ vertices $v_1, \dots, v_n$ and $n$ edges $e_1, \dots, e_n$ satisfying $r(e_j) = v_{j+1} = s(e_{j+1})$ for $j<n$ and $r(e_n) = v_1 = s(e_1)$. The graph $C_4$ is pictured below: 
\begin{center}
\begin{tikzpicture}[->,>=stealth',shorten >=1pt,auto,node distance=3cm,
                    thick,main node/.style={circle}]

  \node[main node] (00) {$v_1$};
  \node[] at (0,2) (02) {$v_2$};
  \node[] at (2,2) (22) {$v_3$};
\node[] at (2,0) (20) {$v_4$};

    \path[->,draw,thick]
        (00) edge node {$e_1$} (02);
    \path[->,draw,thick]
        (02) edge node {$e_2$} (22);
    \path[->,draw,thick]
        (22) edge node {$e_3$} (20);
    \path[->,draw,thick]
        (20) edge node {$e_4$} (00);

\end{tikzpicture}
\end{center}

Then $C^*(C_n)$ is isomorphic to $C(\mathbb{T}, M_n(\mathbb{C}))$. For more details, see Lemma 2.4 of \cite{huef.raeburn} and Theorem 2.2 of \cite{evans}.
\end{example}

In addition to graph $C^*$-algebras, there are also non-self-adjoint operator algebras associated to graphs. Given a graph $G$, a free semigroupoid algebra is the unital WOT-closed algebra generated by a CKT $G$-family. The most common such algebra is the \emph{left regular free semigroupoid algebra}, which is generated by the left regular representation on the \emph{path space} $\mathbb{F}^+(G)$ of $G$, which is the set of all paths in $G$. More specifically, the left regular free semigroupoid algebra of a graph $G$ is defined as follows. Let $\mathcal{H}_G$ be a Hilbert space with orthonormal basis $\{\xi_{w}\}_{w \in \mathbb{F}^+(G)}$, indexed by the path space of $G$. This is called a Fock space. For each $w \in \mathbb{F}^+(G)$, we can define a linear operator $L_w \in \mathcal{B}(\mathcal{H}_G)$ as follows. For $\mu \in \mathbb{F}^+(G)$, let
\[ L_w(\xi_\mu) = \left\{ \begin{array}{cc} \xi_{w\mu} & \text{if $s(w) = r(\mu)$} \\ 0 & \text{else} \end{array} \right. .\]
Then $L_w$ is a partial isometry on the Fock space, sometimes called a \emph{partial creation operator}.

Notice that if $e$ and $f$ are edges, then $L_e$ and $L_f$ have orthogonal ranges. Also, for any vertex $x \in G^0$, $L_x$ is a projection:
\[ L_x \xi_\nu = \left\{ \begin{array}{cc} \xi_{\nu} & \text{if $r(\nu) = x$} \\ 0 & \text{else} \end{array} \right. .\] 
For a path $\mu = e_1 e_2 \dots e_n$, let $L_\mu = L_{e_1} L_{e_2} \dots L_{e_n}$. Then $\{L_\mu\}_{\mu \in \mathbb{F}^+(G)}$ is a CKT $G$-family as defined in Section \ref{section1}.

\begin{definition}[\cite{kribs.power.2004}, Definition 3.2]
Let $\mathfrak{L}_G$ be the WOT-closed algebra generated by $\{L_w\}_{w \in \mathbb{F}^+(G)}$. This is called the \emph{(left regular) free semigroupoid algebra}. 
\end{definition}

\begin{remark}
The norm-closed algebra generated by $\{L_\mu\}_{\mu \in \mathbb{F}^+(G)}$ for a finite graph $G$ is called a \emph{quiver algebra}. For more on quiver algebras, see \cite{katsoulis.kribs.2004, muhly.1997, muhly.solel.1998, muhly.solel.1999}.
\end{remark}

We now include a few examples of left regular free semigroupoid algebras generated by graphs.

\begin{example}[\cite{kribs.power.2004}, Example 6.1]
Consider the graph $G$ with a single vertex $x$ and a single edge $e$, as in Example \ref{singleloop}. The Hilbert space $\mathcal{H}_G$ is isomorphic to the Hardy space $H^2$, and $L_e$ and $L_x$ are isomorphic to the unilateral shift and the identity operator, respectively. Thus, $\mathfrak{L}_G$ is isomorphic to the WOT-closed algebra generated by those two operators, which is $H^\infty$.
\end{example}

\begin{example} If $G$ has only a single vertex, then the unital WOT-closed algebra generated by a CKT family is called a \emph{free semigroup algebra}, and can also be said to be generated by a family of $n$ isometries with orthogonal ranges. The left regular free semigroup algebra, also called the non-commutative analytic Toeplitz algebra, is written $\mathfrak{L}_n$. It was introduced by Popescu \cite{popescu1, popescu2, popescu3} and has been studied extensively by Arias and Popescu \cite{arias.popescu.1995}, and Davidson and Pitts \cite{davidson.pitts.1999}.

Free semigroup algebras other than the left regular free semigroup algebra have also been studied. Davidson, Katsoulis, and Pitts prove a structure theorem for all free semigroup algebras in \cite{davidson.katsoulis.pitts}. Read gives an example of a CKT family for which the free semigroup algebra is $\mathcal{B}(\mathcal{H})$ (\cite{read}; see also \cite{davidson.2006}). Davidson gives other examples in \cite{davidson.survey}. For more work on free semigroup algebras, see \cite{davidson.pitts.shpigel, davidson.li.pitts.2001, davidson.pitts.1998, davidson.pitts.1998.2, davidson.wright, kribs.2001}.
\end{example}

\begin{example}[\cite{kribs.power.2004}, Example 6.3] Consider the graph $G$ given in Example \ref{toeplitz}. Then $\mathfrak{L}_G$ is generated by $L_e, L_f, L_x$, and $L_y$. If we make the identifications $\mathcal{H}_G = L_x \mathcal{H}_G \oplus L_y \mathcal{H}_G \cong H^2 \oplus H^2$, where $H^2$ is the Hardy space, then 
\[ L_e \cong \left[ \begin{array}{cc} S & 0 \\ 0 & 0 \end{array} \right]; \ L_f \cong \left[ \begin{array}{cc} 0 & 0 \\ S & 0 \end{array} \right]; \ L_e \cong \left[ \begin{array}{cc} I & 0 \\ 0 & 0 \end{array} \right]; \ L_e \cong \left[ \begin{array}{cc} 0 & 0 \\ 0 & I \end{array} \right],\]
where $S$ is the unilateral shift. Thus, $\mathfrak{L}_G$ is unitarily equivalent to
\[ \mathfrak{L}_G \cong \left[ \begin{array}{cc} H^\infty & 0 \\ H^\infty_0 & \mathbb{C}I \end{array} \right].\]
\end{example}

We now consider some properties of free semigroupoid algebras. Analogous to the left regular free semigroupoid algebra, we can use the right regular representation on $\mathbb{F}^+(G)$ to define a right analogue to $\mathfrak{L}_G$. Given $\mu \in \mathbb{F}^+(G)$, we define an operator $R_{\tilde{\mu}}$ by
\[ R_{\tilde{\mu}} \xi_\nu = \left\{ \begin{array}{cc} \xi_{\nu\mu} & \text{if $r(\mu) = s(\nu)$} \\ 0 & \text{else} \end{array} \right.,\]
where $\tilde{\mu}$ is $\mu$ with the edges in the reverse order.
Let $\mathfrak{R}_G$ be the WOT-closed algebra generated by $\{R_{\tilde{\mu}}\}_{\mu \in \mathbb{F}^+(G)}$.

If $A \in \mathfrak{L}_G$, then clearly $A \in \mathfrak{R}_G'$, since for any $\mu, \nu, w \in \mathbb{F}^+(G)$, \[ L_\nu R_{\tilde{\mu}} \xi_w = L_\nu \xi_{w \mu} = \xi_{\nu w \mu} = R_{\tilde{\mu}} \xi_{\nu w} = R_\mu L_\nu \xi_x.\]

The equality $\mathfrak{L}_G = \mathfrak{R}_G'$ was shown by Davidson and Pitts for the one vertex case in \cite{davidson.pitts.1999}, and by Kribs and Power in the general case in \cite{kribs.power.2004}. These authors also show that $\mathfrak{R}_G = \mathfrak{L}_G'$, so it follows that $\mathfrak{L}_G$ is its own double commutant, $\mathfrak{L}_G = \mathfrak{L}_G''$. The proof of these results involves a ``Fourier expansion" of $A$ which is worth outlining below. 

Let $A$ be in $\mathfrak{L}_G$. For each vertex $x \in G^0$, there are constants $\{a_w\}_{s(w) = x}$ such that
\[ A\xi_x = AL_x \xi_x = R_x(AL_x)\xi_x = \sum\limits_{s(w) = x} a_w \xi_w.\]
So for $v = xv \in \mathbb{F}^+(G)$,
\[ A\xi_v = R_vA\xi_x = \sum\limits_{s(w) = x} a_w \xi_{wv}.\]
We would like to conclude that $A$ is equal to the (possibly infinite) sum $\sum_{w \in \mathbb{F}^+(G)} a_w L_w$ in some sense. Davidson and Pitts \cite{davidson.pitts.1999} prove that the Cesàro partial sums of $\sum_{w \in \mathbb{F}^+(G)} a_w L_w$ converge SOT to $A$ by relating them to the following Cesàro partial sums for $A$:

\begin{definition}\label{cesaro sum defn} For a graph $G$, let $E_i$ be the projection onto $\text{span}\{ \xi_\mu : \mu \in \mathbb{F}^+(G), \ |\mu| = i\}$. The Cesàro sums of $A \in \mathcal{B}(\mathcal{H}_G)$ are given by
\[ \Sigma_k(A) = \sum\limits_{|j| < k} \bigg( 1- \frac{|j|}{k}\bigg) \Phi_j(A),\]
where $\Phi_j(A) = \sum\limits_{\ell \geq \max\{0, -j\}} E_\ell AE_{\ell + j}$ is the $i$th diagonal of $A$ in the matrix form associated to the partition $I = E_0 + E_1 + E_2 + \dots$. 
\end{definition}
The fact that these Cesàro sums of $A$ converge in the strong operator topology (SOT) to $A$ is a consequence of the following proposition, which is referenced but not explicitly proven in \cite{davidson.pitts.1999}. We outline a proof here.

\begin{proposition}\label{cesaro}
Let $A \in \mathcal{B}(\mathcal{H})$. Let $\{P_j\}$ be a sequence of finite rank projections such that $\sum\limits_{j=1}^\infty P_j = I$. The Cesàro sums of $A$, given by
\[ \Sigma_k(A) = \sum\limits_{|j| < k} \bigg( 1- \frac{|j|}{k}\bigg) \Phi_j(A),\]
where $\Phi_j(A) = \sum\limits_{\ell \geq \max\{0, -j\}} P_\ell AP_{\ell + j}$, converge SOT to $A$.
\end{proposition}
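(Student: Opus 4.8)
The plan is to recognize the Cesàro operators $\Sigma_k(A)$ as the Fejér means of an $\mathcal{H}$-valued function coming from a gauge action, so that the proposition becomes a vector-valued instance of Fejér's theorem. First I would introduce, for $t\in[-\pi,\pi]$, the operator $U_t=\sum_{j}e^{ijt}P_j$; the series converges in the strong operator topology because $\sum_j\|P_j\eta\|^2=\|\eta\|^2$ for every $\eta\in\mathcal{H}$, and $U_t$ is unitary since the $P_j$ are pairwise orthogonal and sum to $I$. Put $\alpha_t(A)=U_tAU_t^*$. Two elementary facts are needed: (i) for each $\eta$ the map $t\mapsto U_t\eta$ is norm-continuous (apply dominated convergence to $\|U_t\eta-U_s\eta\|^2=\sum_j|e^{ijt}-e^{ijs}|^2\|P_j\eta\|^2$), hence $t\mapsto\alpha_t(A)\xi$ is norm-continuous for each $\xi$, and $\alpha_0=\mathrm{id}$; and (ii) each $\Phi_j(A)=\sum_\ell P_\ell AP_{\ell+j}$ is a well-defined bounded operator with $\|\Phi_j(A)\|\le\|A\|$, because the summands have pairwise orthogonal ranges and pairwise orthogonal initial spaces.

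The crux is to identify $\Phi_j(A)$ with the $j$-th Fourier coefficient of $t\mapsto\alpha_t(A)$: for every $\xi\in\mathcal{H}$,
\[\Phi_j(A)\xi=\frac{1}{2\pi}\int_{-\pi}^{\pi}e^{ijt}\,\alpha_t(A)\xi\;dt.\]
I would prove this by pairing with an arbitrary $\eta\in\mathcal{H}$, expanding $U_t^*\eta=\sum_\ell e^{-i\ell t}P_\ell\eta$ and $U_t^*\xi=\sum_m e^{-imt}P_m\xi$ with partial sums converging \emph{uniformly} in $t$, and integrating the resulting product term by term; only the terms with $m=\ell+j$ survive, leaving $\sum_\ell\langle P_\ell\eta,AP_{\ell+j}\xi\rangle=\langle\eta,\Phi_j(A)\xi\rangle$. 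Interchanging the finite sum defining $\Sigma_k(A)$ with the integral then gives, for any $\xi$,
\[\Sigma_k(A)\xi=\sum_{|j|<k}\Big(1-\frac{|j|}{k}\Big)\Phi_j(A)\xi=\frac{1}{2\pi}\int_{-\pi}^{\pi}F_k(t)\,\alpha_t(A)\xi\;dt,\]
where $F_k(t)=\sum_{|j|<k}\big(1-\tfrac{|j|}{k}\big)e^{ijt}\ge 0$ is the Fejér kernel; that is, $\Sigma_k(A)\xi$ is the $k$-th Fejér mean at $t=0$ of the continuous $\mathcal{H}$-valued function $t\mapsto\alpha_t(A)\xi$.

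To finish, I would invoke the approximate-identity estimate. Since $F_k\ge 0$, $\tfrac{1}{2\pi}\int_{-\pi}^{\pi}F_k=1$, and $\alpha_0(A)\xi=A\xi$,
\[\big\|\Sigma_k(A)\xi-A\xi\big\|\le\frac{1}{2\pi}\int_{-\pi}^{\pi}F_k(t)\,\big\|\alpha_t(A)\xi-A\xi\big\|\;dt.\]
Given $\varepsilon>0$, norm-continuity at $0$ yields $\delta>0$ with $\|\alpha_t(A)\xi-A\xi\|<\varepsilon$ for $|t|<\delta$; on $\delta\le|t|\le\pi$ the integrand is at most $2\|A\|\,\|\xi\|\,F_k(t)$, and $\sup_{\delta\le|t|\le\pi}F_k(t)\to 0$ as $k\to\infty$. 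Splitting the integral at $\delta$ gives $\limsup_k\|\Sigma_k(A)\xi-A\xi\|\le\varepsilon$, and since $\varepsilon$ and $\xi$ were arbitrary, $\Sigma_k(A)\to A$ in the strong operator topology.

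The step I expect to require the most care is the Fourier-coefficient identity in the second paragraph, specifically the justification of the term-by-term integration — this is exactly where one uses that the partial sums of $U_t^*\eta$ converge uniformly in $t$, not merely in $L^2$. Since the gauge action is strongly (not norm) continuous, one must also phrase everything vectorwise rather than in operator norm, but this is harmless because strong-operator convergence is vectorwise by definition. Everything else (unitarity and strong continuity of $U_t$, the norm bound on $\Phi_j(A)$, and the Fejér-kernel estimate) is routine.
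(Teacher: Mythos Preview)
Your proposal is correct and follows essentially the same route as the paper: introduce the gauge unitaries $U_t=\sum_j e^{ijt}P_j$, identify the diagonals $\Phi_j(A)$ as Fourier coefficients of the strongly continuous map $t\mapsto U_tAU_t^*\xi$, and then invoke Fej\'er's theorem in vector-valued form. The only cosmetic differences are that the paper parameterizes by $\lambda\in\mathbb{T}$ (with $\lambda^{-j}$ on the diagonal) and cites a vector-valued Fej\'er theorem at a general point $t$ before passing back by unitary equivalence, whereas you work directly at $t=0$ and write out the approximate-identity estimate for the Fej\'er kernel; your version is in fact a bit more self-contained.
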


\begin{proof} 
Define a function $f_h : \mathbb{T} \to \mathcal{H}$ as follows. Let $n_j$ be the dimension of $P_j \mathcal{H} P_j$. For $\lambda \in \mathbb{T}$, let $U_\lambda$ be the following matrix:
\[ U_\lambda = \begin{bmatrix} 
\lambda^{-1} (I_{n_1}) & 0 & 0  & \dots \\
0 & \lambda^{-2} (I_{n_2}) & 0 & \dots  \\
0 & 0 &  \lambda^{-3} (I_{n_3}) & \dots \\
\vdots & \vdots & \vdots & \ddots \end{bmatrix},\]
where $I_{n_j}$ is the $n_j \times n_j$ identity matrix. Note $U_\lambda$ is a unitary, and for $A \in \mathcal{B}(\mathcal{H})$:
\[ U_\lambda A U_\lambda^{-1} =  \begin{bmatrix}
1 (A_{1,1}) & \lambda (A_{1,2}) & \lambda^2 (A_{1,3})  & \dots \\
\lambda^{-1} (A_{2,1}) & 1 (A_{2,2}) & \lambda (A_{2,3}) & \dots \\
\lambda^{-2} (A_{3,1}) & \lambda^{-1} (A_{3,2}) &  1 (A_{3,3}) & \dots \\
\vdots & \vdots & \vdots & \ddots  \end{bmatrix}.\]

In other words, conjugating $A$ by $U_\lambda^{-1}$ results in multiplying the $(i,j)$th block component of $A$ by $\lambda^{j-i}$.

Now let $A \in \mathcal{B}(\mathcal{H})$. Fix $h \in \mathcal{H}$. Define the function $f_h : \mathbb{T} \to \mathcal{H}$ by $f_h(\lambda) = U_{\lambda} A U_{\lambda^{-1}}h$. It can be shown that $f_h : \mathbb{T} \to \mathcal{H}$ is a continuous function, which implies that $g_h : [0,2\pi] \to \mathcal{H}$ given by $g_h(t) = f_h(e^{it})$ is also continuous. Furthermore,
\[ g_h(t) = \sum\limits_{j=-\infty}^\infty e^{ijt} \Phi_j(A)h.\]
It can then be shown that the Fourier coefficients of $g_h(t)$ are given by
\[z_n = \frac{1}{2\pi} \int_0^{2\pi} e^{-ins} g_h(s) \text{ d}s = \Phi_n(A)h.\]
This means that the Cesàro partial sums $\sigma_k g_h(t)$ of $g_h(t)$ are given by:
\[ \sigma_k g_h (t) =  \sum\limits_{|j| < k} e^{ijt} \bigg(\frac{k - |j|}{k}\bigg) \Phi_j(A)h =  U_{e^{it}}(\Sigma_k A)U_{e^{it}}^{-1}h.\]

Here we can apply a vector-valued version of Fejer's theorem (stated as Theorem 1.1 in \cite{cl}, for example) to conclude that for $t \in (0, 2\pi)$,  $\|\sigma_k g_h(t) - g_h(t)\| \rightarrow 0$ as $k \rightarrow \infty$. That is, $\|U_{e^{it}}\Sigma_k A U_{e^{-it}}h - U_{e^{it}} AU_{e^{-it}}h\|$ converges to 0. This holds for all $h \in \mathcal{H}$. So for $t \in (0, 2\pi)$, we have that $U_{e^{it}}\Sigma_k A U_{e^{-it}}$ converges SOT to $U_{e^{it}}A U_{e^{-it}}$. Since SOT convergence is preserved by unitary equivalence, this means that $\Sigma_k A$ converges SOT to $A$ as desired.
\end{proof}

It is a well-known fact that all $C^*$-algebras are semisimple (see II.1.6.4 of \cite{blackadar}) and therefore all graph $C^*$-algebras are semisimple. Davidson and Pitts showed that $\mathfrak{L}_n$ contains no non-trivial idempotents or non-zero quasinilpotent elements, implying that $\mathfrak{L}_n$ is also semisimple (\cite{davidson.pitts.1999}, Corollary 1.9; see Remark 1.10 in \cite{davidson.pitts.1999} for a possible earlier reference). For an arbitrary graph $G$, Kribs and Power showed that $\mathfrak{L}_G$ is semisimple if and only if every path in $\mathbb{F}^+(G)$ lies on a cycle (\cite{kribs.power.2004}, Theorem 5.1).

In Theorem 4.1 of \cite{arias.popescu.1995}, Arias and Popescu prove that $\mathfrak{L}_n$ is a reflexive algebra. Kribs and Power showed that for any graph $G$, $\mathfrak{L}_G$ is reflexive \cite{kribs.power.2004}. For the more generalized free semigroupoid algebras beyond that induced by the left regular representation, Kennedy showed that any free semigroup algebra is reflexive \cite{kennedy} and Davidson, Dor-on, and Li showed that any free semigroupoid algebra is reflexive \cite{davidson.dor-on.li}.

Concerning the stronger condition of hyper-reflexivity, Davidson \cite{davidson.1987} showed that $\mathfrak{L}_1$ is hyper-reflexive with constant at most 19. In \cite{davidson.pitts.1999}, Davidson and Pitts go further and prove that $\mathfrak{L}_n$ is hyper-reflexive for $n \geq 2$, with constant at most 51. Davidson notes in \cite{davidson.survey} that this bound can be lowered to 3 by applying Bercovici's hyper-reflexivity theorem (Theorem 3.1, \cite{bercovici}). Kennedy showed that a certain class of free semigroup algebras is hyper-reflexive \cite{kennedy}, and Jaëck and Power showed that if $G$ is any finite graph, then $\mathfrak{L}_G$ is hyper-reflexive \cite{jaeck.power}. 

Kribs and Power also showed that unitarily equivalent free semigroupoid algebras have isomorphic directed graphs (Theorem 9.1, \cite{kribs.power.2004}). This interesting fact is not true for graph $C^*$-algebras; see Example \ref{compacts}. For more discussion of isomorphic graphs and associated algebras, see \cite{katsoulis.kribs.2004}. For more on the structure of free semigroupoid algebras, see \cite{jury.kribs}. For a study of weighted versions of free semigroupoid algebras, see \cite{kribs.2004} and \cite{kribs.levene.power}.

\section{Operator Algebras from Higher rank graphs}\label{section3}

Higher rank graphs were introduced by Kumjian and Pask in \cite{kumjian.pask}, and are defined using the language of categories.
\begin{definition} A $k$-graph is a countable category $\Lambda$ with range and source maps $r$ and $s$ respectively, and a functor $d: \Lambda \rightarrow \mathbb N^k$ satisfying the following factorization property: for every $\lambda \in \Lambda$ and $m, n \in \mathbb{N}^k$ with $d(\lambda) = m + n$, there are unique elements $\mu, \nu \in \Lambda$ such that $\lambda = \mu \nu$ and $d(\mu) = m$ and $d(\nu) = n$. 

For a path with degree $n = (n_1, n_2, \dots, n_k) \in \mathbb{N}^k$, we say its length is $|n| = n_1 + n_2 + \dots + n_k$. Let $\Lambda^0$ be the set of paths with length 0 (the vertices of $\Lambda$) and let $\Lambda^1$ be the set of paths of length 1.
\end{definition}

\begin{example} Any directed graph $(G^0, G^1, r, s)$ gives a category where $\Lambda$ is the collection of all paths and $r, s$ are the range and source maps. This is a $1$-graph, satisfying the factorization property trivially since each path can be written as a concatenation of edges in one unique way. In fact, any $1$-graph $\Lambda$ comes from a directed graph.
\end{example}

As with graphs, we represent higher rank graphs in diagrams by using a dot or label for each element of $\Lambda^0$, and using arrows between the dots to represent elements of $\Lambda^1$. We represent the degree of each edge with a color. In the diagrams that follow, unbroken, dotted, and dashed arrows will represent three different colors.

\begin{example}\label{hr1} Consider the following $2$-graph, where the edges $b_1$ and $b_2$ have degree $(1,0)$, the edges $r_1$ and $r_2$ have degree $(0,1)$. In order for this graph to satisfy the factorization property, we must have the identification $b_2r_1 = r_2b_1$.
\begin{center}
\begin{tikzpicture}[->,>=stealth',shorten >=1pt,auto,node distance=3cm,
                    thick,main node/.style={circle}]

  \node[main node] (1) {$\bullet$};
\node[] at (2,0) (2) {$\bullet$};
\node[] at (0,-2) (3) {$\bullet$};
\node[] at (2,-2) (4) {$\bullet$};
 
  \path[->,draw,thick]
        (1) edge node {$b_1$} (2);
  
    \path[->,draw,thick, dashed]
        (2) edge node {$r_2$} (4);
        \path[->,draw,thick]
        (3) edge node {$b_2$} (4);
        \path[->,draw,thick, dashed]
        (1) edge node [swap]{$r_1$} (3);

\end{tikzpicture}
\end{center}
\end{example}

Such a diagram which indicates the vertices, edges, and degrees of the edges is called a 1-skeleton, and does not necessarily determine the $k$-graph, as shown in the next example.

\begin{example}(\cite{raeburn}, Example 10.5) The following $1$-skeleton corresponds to two different $2$-graphs:
\begin{center}
\begin{tikzpicture}[->,>=stealth',shorten >=1pt,auto,node distance=3cm,
                    thick,main node/.style={circle}]

  \node[main node] (1) {$\bullet$};
\node[] at (2,0) (2) {$\bullet$};
\node[] at (0,-2) (3) {$\bullet$};
\node[] at (2,-2) (4) {$\bullet$};

    \path[->,draw,thick]
        (2) edge node [swap]{$b_1$} (1);
    \path[->,draw,thick, dashed, bend left=20]
        (2) edge node {$r_4$} (4);
    \path[->,draw,thick, dashed, bend right=20]
        (2) edge node [swap]{$r_3$} (4);
    \path[->,draw,thick]
        (4) edge node {$b_2$} (3);
    \path[->,draw,thick, dashed, bend left=20]
        (1) edge node {$r_2$} (3);
    \path[->,draw,thick, dashed, bend right=20]
        (1) edge node [swap]{$r_1$} (3);

\end{tikzpicture}
\end{center}
The path $r_1b_1$ must be identified with either $b_2r_3$ or $b_2r_4$ in order for the factorization property to be satisfied. If $r_1b_1 = b_2r_3$, then it must be true that $r_2b_1 = b_2r_4$. Likewise, if $r_1b_1 = b_2r_4$, then $r_2b_1 = b_2r_3$.
\end{example}

\begin{example}\label{loops}
Consider the $3$-graph with one edge $e$ of degree $(1,0,0)$, one edge $f$ of degree $(0,1,0)$ and one edge $g$ of degree $(0,0,1)$.

\begin{center}
\begin{tikzpicture}[->,>=stealth',shorten >=1pt,auto,node distance=3cm,
                    thick,main node/.style={circle}]

  \node[main node] (1) {$\bullet$};

  \path[->,draw,thick]
        (1) edge [loop left] node {$e$} (1);
    \path[->,draw,thick, dashed]
        (1) edge [loop above] node {$f$} (1);
    \path[->,draw,thick, dotted]
        (1) edge [loop right] node {$g$} (1);
\end{tikzpicture}
\end{center}

The factorization property implies that for each $(n, m, k) \in \mathbb{N}^3$, there is one unique path of degree $(n,m,k)$. So, for example, $ffgfe = egf^3$.
\end{example}

Next, we mention some definitions and results for higher rank graph $C^*$-algebras analogous to graph $C^*$-algebras.

\begin{definition} For a $k$-graph $\Lambda$ with degree map $d$, we define $d^{-1}(n)$ to be paths of degree $n$ for all $n\in\mathbb N^k$.  We say $\Lambda$ is \emph{row-finite} if $r^{-1}(v)\cap d^{-1}(n)$ is finite for all $v\in\Lambda^0$ and $n\in\mathbb N^k$.
\end{definition}

In Definition \ref{defn2.1} of a Cuntz-Krieger $G$-family for a row-finite directed graph $G$, we saw that the orthogonal projection $P_v$ was the sum of $S_eS_e^*$ for all $e\in G^1$ with $r(e)=v$, for each $v \in G^0$ that is not a source. So to define a Cuntz-Krieger family for higher rank graphs, we first make this as a basic assumption that motivates the following definition.

\begin{definition} A $k$-graph $\Lambda$ is said to \emph{have no source} if for every $v\in \Lambda^0$ and $n\in\mathbb N^k$, there exists $\mu \in \Lambda$ such that $d(\mu) = n$ and $r(\mu) = v$.
\end{definition}

\begin{definition} Let $\Lambda$ be a $k$-graph that is row-finite and has no source. A Cuntz-Krieger $\Lambda$-family in $\mathcal H$ is a collection of partial isometries $S=\{S_{\mu} : \mu\in\Lambda\}$ satisfying the following:
\begin{enumerate}
    \item $\{S_v: v\in\Lambda^0\}$ are mutually orthogonal projections,
    \item $S_{\mu_1} S_{\mu_2} = S_{\mu_1\mu_2}$ when $s(\mu_1) = r(\mu_2)$,
    \item $S_{\mu}^*S_{\mu} = S_{s(\mu)}$ for all $\mu\in\Lambda$,
    \item $S_v = \sum\limits_{\mu\in\Lambda, d(\mu) =n,  r(\mu) = v} S_{\mu}S_{\mu}^*$ for all $v\in \Lambda^0$ and $n\in\mathbb N^k$.
\end{enumerate}
\end{definition}
Let $\Lambda$ be a $k$-graph that is row-finite and has no sources. Let $S$ be a Cuntz-Krieger $\Lambda$-family. Then the partial isometries associated to paths of the same degree have orthogonal ranges; that is, $\{S_{\mu}S_{\mu}^* : d(\mu) = n\}$ is a mutually orthogonal family of projections for each $n\in\mathbb N^k$. It is natural to ask if there is a universal $C^*$-algebra $C^*(\Lambda)$ generated by a Cuntz-Krieger $\Lambda$-family that will have property that if $\{S_{\mu} : \mu\in\Lambda\}$ is any Cuntz-Krieger $\Lambda$-family, then there is a $*$-homomorphism from $C^*(\Lambda)$ to $C^*\left(\{S_{\mu} : \mu\in\Lambda\}\right)$. 

For any $k$-graph $\Lambda$, $C^*(\Lambda)$ always exists (see Proposition 10.9 of \cite{raeburn}) and every partial isometry in the generating family of the universal Cuntz-Krieger $\Lambda$-family is non-zero (see Proposition 2.11 of \cite{kumjian.pask} or Corollary 10.13 of \cite{raeburn}). An appropriate analogue for the Gauge Invariant Uniqueness Theorem for $k$-graphs, can be found in Theorem 3.4 of \cite{kumjian.pask}. The most important application of the Gauge Invariant Uniqueness Theorem for $k$-graphs is that for a $k$-graph $\Lambda$, there is an isomorphism between $C^*(\Lambda)$ and the $C^*$-algebra of the locally compact groupoid with unit space equal to the infinite path space of $\Lambda$. For a detailed study of groupoids and $C^*$-algebras generated by them, see \cite{groupoid}. For the details of the proof of isomorphism in the case of directed graphs, see \cite{graphgroupoid}, and for $k$-graphs, see Corollary 3.5 (i) of \cite{kumjian.pask}. 

It would be interesting to find appropriate version(s) of the Cuntz-Krieger Uniqueness Theorem for $k$-graphs. For directed graphs, in the statement of the Cuntz-Krieger Uniqueness Theorem, we had the condition that every cycle has an entry. We have a version of the Cuntz-Krieger Uniqueness Theorem for $k$-graphs under an \emph{aperiodicity condition}; see Section 4 of \cite{kumjian.pask}. 

Below we give a few examples of $C^*$-algebras generated by higher rank graphs.
\begin{example}
If $\Lambda$ is a $1$-graph associated to a row-finite directed graph $G$ with no sources, then by restricting the Cuntz-Krieger $\Lambda$-family to $G^0$ and $G^1$, we obtain a Cuntz-Krieger $G$-family in the sense of Definition \ref{defn2.1}. Thus for a directed graph $G$, we have $C^*(\Lambda)= C^*(G)$. 
\end{example}

\begin{example} Let $T$ be the semigroup $\mathbb N^k$ viewed as a small category and $d: T\rightarrow \mathbb N^k$ be the identity map. Then $(T,d)$ is a $k$-graph. We have, $C^*(T) = C(\mathbb T^k)$, where $\mathbb T^k$ denote the $k$-torus (see Example 1.7 (iii) of \cite{kumjian.pask}).
\end{example}

\begin{example} For $k\geq 1$, let $\Omega_k$ be the small category with $\Omega_k^0=\mathbb N^k$ and $$\Omega_j^*=\{(p, q)\in\mathbb N^k\times\mathbb N^k : p\leq q\text{ if and only if } p_i\leq q_i\text{ for all } 1\leq i\leq k\}.$$ Let the source and range maps be given by $r(m, n) = m$ and $s(m, n) = n$ with $d: \Omega_k\rightarrow \mathbb N^k$ defined as $d(m, n) = n-m$. Then $\Omega_k$ is a $k$-graph with $C^*(\Omega_k) = \mathcal K(l^2(\mathbb N^k))$ (see Example 1.7 (ii) of \cite{kumjian.pask}).

Below we give the $1$-skeleton for the infinite graph $\Omega_2$. 
\begin{center}
\begin{tikzpicture}[->,>=stealth',shorten >=1pt,auto,node distance=3cm,
                    thick,main node/.style={circle}]

  \node[main node] (00) {$\bullet$};
  \node[] at (0,1) (01) {$\bullet$};
\node[] at (0,2) (02) {$\bullet$};
\node[] at (0,3) (03) {$\vdots$};
\node[] at (1,0) (10) {$\bullet$};
\node[] at (1,1) (11) {$\bullet$};
\node[] at (1,2) (12) {$\bullet$};
\node[] at (1,3) (13) {$\vdots$};
\node[] at (2,0) (20) {$\bullet$};
\node[] at (2,1) (21) {$\bullet$};
\node[] at (2,2) (22) {$\bullet$};
\node[] at (2,3) (23) {$\vdots$};
\node[] at (3,0) (30) {$\bullet$};
\node[] at (3,1) (31) {$\bullet$};
\node[] at (3,2) (32) {$\bullet$};
\node[] at (3,3) (33) {$\vdots$};
\node[] at (4,0) (40) {$\dots$};
\node[] at (4,1) (41) {$\dots$};
\node[] at (4,2) (42) {$\dots$};
 
    \path[->,draw,thick, dashed]
        (01) edge node {} (00);
    \path[->,draw,thick, dashed]
        (11) edge node {} (10);
    \path[->,draw,thick, dashed]
        (21) edge node {} (20);
    \path[->,draw,thick, dashed]
        (31) edge node {} (30);
        
    \path[->,draw,thick, dashed]
        (02) edge node {} (01);
    \path[->,draw,thick, dashed]
        (12) edge node {} (11);
    \path[->,draw,thick, dashed]
        (22) edge node {} (21);
    \path[->,draw,thick, dashed]
        (32) edge node {} (31);
        
    \path[->,draw,thick, dashed]
        (03) edge node {} (02);
    \path[->,draw,thick, dashed]
        (13) edge node {} (12);
    \path[->,draw,thick, dashed]
        (23) edge node {} (22);
    \path[->,draw,thick, dashed]
        (33) edge node {} (32);

    \path[->,draw,thick]
        (10) edge node {} (00);
    \path[->,draw,thick]
        (20) edge node {} (10);
    \path[->,draw,thick]
        (30) edge node {} (20);
    \path[->,draw,thick]
        (40) edge node {} (30);
        
    \path[->,draw,thick]
        (11) edge node {} (01);
    \path[->,draw,thick]
        (21) edge node {} (11);
    \path[->,draw,thick]
        (31) edge node {} (21);
    \path[->,draw,thick]
        (41) edge node {} (31);
        
    \path[->,draw,thick]
        (12) edge node {} (02);
    \path[->,draw,thick]
        (22) edge node {} (12);
    \path[->,draw,thick]
        (32) edge node {} (22);
    \path[->,draw,thick]
        (42) edge node {} (32);

\end{tikzpicture}
\end{center}

\end{example}

We would also like to mention that $2$-graphs have been of special interest; see \cite{twograph}. 

Before moving to free semigroupoid algebras for higher rank graphs, we note that row-finiteness and having no sources have been important conditions on a $k$-graph $\Lambda$ to define $C^*(\Lambda)$. Authors have tried to relax these conditions. The procedure for dealing with $1$-graphs (or directed graphs) with sources can be done using Lemma 2.1 of \cite{rowfinite}. However, when $k>1$, there are many different kinds of sources, and there is as yet no analogous procedure for dealing with them. For some particular classes of row-finite $k$-graphs with sources with an extra local convexity condition, see \cite{withsources}. 

But the question remains : Can we analyze $k$-graphs that are not row-finite? If a vertex emits infinitely many edges, then the Cuntz-Krieger relation \begin{equation}\label{section3:eq1}S_v = \sum\limits_{\mu\in\Lambda, d(\mu) =n,  r(\mu) = v} S_{\mu}S_{\mu}^*\end{equation} does not make sense. Fowler and Raeburn noticed that even if vertices emit infinitely many edges in a $1$-graph (or directed graph), one can work with graph algebras if we merely insist that equality in \eqref{section3:eq1} occurs when $v$ emits finitely many edges; see Corollaries 4.2 and 4.5 of \cite{notrow}. For a more detailed study of this, see \cite{infinitegraphs}. For an alternative approach to studying the graph $C^*$-algebra of a directed graph, see \cite{functorial}. In \cite{finitelyaligned}, the theory of Cuntz-Krieger families and graph algebras has been generalized to the class of finitely-aligned $k$-graphs. This class contains in particular all row-finite $k$-graphs. A Gauge-Invariant
Uniqueness Theorem and a Cuntz-Krieger Uniqueness Theorem have also been proved for finitely-aligned $k$-graphs in \cite{finitelyaligned}. For the study of general $k$-graphs, the work of Spielberg in the more general setting of categories of paths, which we discuss in Section \ref{section4}, is useful.

Kribs and Power extended the free semigroupoid algebra construction to higher rank graphs in \cite{kribs.power.2006}. For a higher rank graph $\Lambda$, define a Fock space Hilbert space $\mathcal{H}_\Lambda$ with orthonormal basis $\{\xi_\mu\}_{\mu \in \Lambda}$, indexed by the elements of $\Lambda$. We can then define linear operators $L_\mu \in \mathcal{B}(\mathcal{H}_\Lambda)$ as follows. For $\nu \in \Lambda$, define:
\[ L_\mu \xi_\nu = \left\{ \begin{array}{cc} \xi_{\mu \nu} & \text{if $s(\mu) = r(\nu)$} \\ 0 & \text{else} \end{array} \right.\] 
As before, if $x$ is a vertex of $\Lambda$, then $L_x$ is a projection. 

\begin{definition}[\cite{kribs.power.2006}, Definition 3.1]
The WOT-closed algebra generated by $\{L_\mu\}_{\mu \in \Lambda}$ is called the \emph{(left regular) free semigroupoid algebra} for $\Lambda$ and is written $\mathfrak{L}_\Lambda$.
\end{definition}

The right regular free semigroupoid algebra $\mathfrak{R}_\Lambda$ can be defined analogously as in Section \ref{section2}. Kribs and Power showed that, as with free semigroupoid algebras from graphs, the commutant of $\mathfrak{L}_\Lambda$ is $\mathfrak{R}_\Lambda$, and $\mathfrak{L}_\Lambda$ is its own double commutant \cite{kribs.power.2006}.

We now include a few examples of free semigroupoid algebras from higher rank graphs.

\begin{example}
The $2$-graph in Example \ref{hr1} has 9 elements in its path space: four vertices, two edges with degree $(1,0)$, two edges with degree $(0,1)$, and a single edge of degree $(1,1)$. So $\mathcal{H}_\Lambda$ is 9-dimensional, and $\mathfrak{L}_\Lambda$ can be identified with a matrix algebra on this Hilbert space.
\end{example}

\begin{example}(\cite{kribs.power.2006}, Example 4.1)
Let $\Lambda$ be the $3$-graph with one vertex and $3$ loops $e$, $f$, and $g$, as in Example \ref{loops}. In this higher rank graph, a path $\mu$ is uniquely determined by the number of times $e$, $f$, and $g$ appear in any decomposition of $\mu$ into edges. The Hilbert space $\mathcal{H}_\Lambda$ can be identified with $\mathcal{H}_{G} \otimes \mathcal{H}_{G} \otimes \mathcal{H}_G$, where $G$ is the graph with one vertex and one edge. Then $\mathfrak{L}_\Lambda$ is unitarily equivalent to $H^\infty \otimes H^\infty \otimes H^\infty$.
\end{example}

\begin{example} (\cite{kribs.power.2006}, Example 4.3)
Define $C_n^{(k)}$ to be the $k$-graph with $n$ vertices $v_1, \dots v_n$ such that for each $i \in \{1, \dots, n-1\}$, there is one edge of each color with source $v_i$ and range $v_{i+1}$, and in addition, there is one edge of each color with source $v_n$ and range $v_1$. This is called a higher rank cyclic graph. The 1-skeleton for $C_3^{(2)}$ is:
\begin{center}
\begin{tikzpicture}[->,>=stealth',shorten >=1pt,auto,node distance=3cm,
                    thick,main node/.style={circle}]

  \node[main node] (00) {$\bullet$};
  \node[] at (1,2) (02) {$\bullet$};
\node[] at (2,0) (20) {$\bullet$};
 
    \path[->,draw,thick, dashed, bend left = 20]
        (00) edge node {} (02);
    \path[->,draw,thick, dashed, bend left = 20]
        (02) edge node {} (20);
    \path[->,draw,thick, dashed, bend left = 20]
        (20) edge node {} (00);
        
    \path[->,draw,thick, bend right = 20]
        (00) edge node {} (02);
    \path[->,draw,thick, bend right = 20]
        (02) edge node {} (20);
    \path[->,draw,thick, bend right = 20]
        (20) edge node {} (00);

\end{tikzpicture}
\end{center}
Let $H^2(z,w)$ be the Hardy space for the torus $\mathbb{T}^2 = \{(z,w) : |z| = |w| = 1\}$, with its basis $\{z^p w^p : p, q \in \mathbb{N}\}$. Kribs and Power show that $\mathfrak{L}_{C_3^{(2)}}$ is unitarily equivalent to the matrix function algebra
\[\begin{bmatrix} 
H^\infty_{(3,0)}(z,w) & H^\infty_{(3,2)}(z,w) & H^\infty_{(3,1)}(z,w) \\ 
H^\infty_{(3,1)}(z,w) & H^\infty_{(3,0)}(z,w) & H^\infty_{(3,2)}(z,w) \\
H^\infty_{(3,2)}(z,w) & H^\infty_{(3,1)}(z,w) & H^\infty_{(3,0)}(z,w) \end{bmatrix}, \]
where $H^\infty_{3,i}(z,w)$ is the closed span of the basis elements $\{z^pw^q : p+q \equiv i \text{ mod } 3\}$ for $i=1,2$.
\end{example}

Kribs and Power proved that, as with free semigroupoid algebras from graphs, a higher rank free semigroupoid algebra is isomorphic to its bicommutant (\cite{kribs.power.2006}, Corollary 3.5). Also analogous to the graph case, $\mathfrak{L}_\Lambda$ is semisimple if and only if every edge in $\Lambda$ lines in a cycle (Theorem 7.2, \cite{kribs.power.2006}). In Corollary 6.3 of the same paper \cite{kribs.power.2006}, Kribs and Power apply Bercovici's hyper-reflexivity theorem to show that every single-vertex higher rank free semigroupoid algebra is reflexive. They then use this to prove reflexivity for a large class of free semigroupoid algebras. Specifically:
\begin{theorem}(\cite{kribs.power.2006}, Theorem 6.4)
Let $\Lambda$ be a higher rank graph such that no vertex $v$ satisfies all three of the following properties:
\begin{enumerate}
    \item for each $\lambda \in \Lambda$ with degree 1, $r(v) = \lambda$ implies $s(v) = \lambda$;
    \item there is at most one loop edge of each color at $v$; and
    \item there are loop edges $\mu \neq \mu'$ and paths $\lambda, \lambda' \in \Lambda$ with $s(\lambda) = v = s(\lambda')$ that immediately leave $v$ such that $\lambda \mu = \lambda' \mu'$.
\end{enumerate}
Then $\mathfrak{L}_\Lambda$ is reflexive.
\end{theorem}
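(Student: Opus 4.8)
The plan is to prove the nontrivial inclusion $\text{Alg Lat}(\mathfrak{L}_\Lambda)\subseteq\mathfrak{L}_\Lambda$ by localising at the vertices and reducing to the single-vertex case established in Corollary 6.3 of \cite{kribs.power.2006}. First I would observe that for each $v\in\Lambda^0$ the subspace $\mathcal{H}_v:=\overline{\text{span}}\{\xi_\mu:s(\mu)=v\}$ is invariant for every $L_\nu$, so $\mathcal{H}_v\in\text{Lat}(\mathfrak{L}_\Lambda)$; since its orthogonal complement $\bigoplus_{u\neq v}\mathcal{H}_u$ is invariant as well, any $A\in\text{Alg Lat}(\mathfrak{L}_\Lambda)$ splits as $A=\bigoplus_v A|_{\mathcal{H}_v}$ along the source grading. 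Next I would use cyclic invariant subspaces: applying $A$ to $\xi_x$ for a vertex $x$ yields constants with $A\xi_x=\sum_{s(w)=x}a_w\xi_w$, and then comparing the action of $A$ on $\xi_x+\xi_\mu$ (where $x=r(\mu)$) with the invariant subspace $[\mathfrak{L}_\Lambda(\xi_x+\xi_\mu)]=\overline{\text{span}}\{\xi_w+\xi_{w\mu}:s(w)=x\}$ forces the propagation identity $A\xi_\mu=\sum_{s(w)=r(\mu)}a_w\xi_{w\mu}$ for every $\mu\in\Lambda$. In particular the coefficients $a_w$ automatically respect the factorization identifications of $\Lambda$, and the only candidate for $A$ is the formal series $\sum_{w\in\Lambda}a_wL_w$.

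It then remains to show that this formal series represents $A$ as an element of $\mathfrak{L}_\Lambda$. By Proposition \ref{cesaro}, applied with the length grading $I=E_0+E_1+\cdots$ of $\mathcal{H}_\Lambda$, the Cesàro sums $\Sigma_k(A)=\sum_{|j|<k}\bigl(1-\tfrac{|j|}{k}\bigr)\Phi_j(A)$ converge SOT to $A$, where $\Phi_j(A)=\sum_{\ell\geq\max\{0,-j\}}E_\ell AE_{\ell+j}$. Each $\Phi_j(A)$ is bounded, since its summands have pairwise orthogonal domains and pairwise orthogonal ranges, and by the propagation identity it acts on basis vectors as a homogeneous combination of the $L_w$'s. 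So it suffices to show that each such homogeneous piece lies in $\mathfrak{L}_\Lambda$, and decomposing it again along the vertex grading and passing to corners reduces this to a statement about the free semigroupoid algebras of the single-vertex $k$-graphs formed by the return paths at the various vertices. Those are reflexive --- indeed hyper-reflexive, via Bercovici's theorem \cite{bercovici} --- by Corollary 6.3 of \cite{kribs.power.2006}. Conditions (1)--(3) describe exactly the local configuration at a vertex $v$ for which this reduction breaks down; assuming that no vertex satisfies all of (1), (2), and (3) guarantees that every local piece is covered, so $\Phi_j(A)\in\mathfrak{L}_\Lambda$ for all $j$, and therefore $A\in\mathfrak{L}_\Lambda$.

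I expect the main obstacle to be the last step: controlling the (possibly infinite) homogeneous sums $\sum_{|w|=m}a_wL_w$ and showing they define bounded operators lying in $\mathfrak{L}_\Lambda$, which is precisely where the single-vertex reflexivity input is used, together with the combinatorial bookkeeping needed to glue the local conclusions across vertices compatibly with the $k$-graph factorization property. A further point requiring care is verifying that conditions (1)--(3) are exactly the obstruction --- that a vertex failing any one of them yields a local algebra to which Corollary 6.3 applies, while a vertex satisfying all three embeds a non-reflexive ``analytic'' algebra near $v$ --- since this is what pins the hypothesis of the theorem to an exact, rather than merely sufficient, condition.
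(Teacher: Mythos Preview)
The survey paper does not itself prove this theorem; it is quoted from \cite{kribs.power.2006} with only a one-sentence indication of the strategy (``apply Bercovici's hyper-reflexivity theorem to show that every single-vertex higher rank free semigroupoid algebra is reflexive [Corollary 6.3], then use this to prove reflexivity for a large class''). Your proposal matches that outline, and the survey's later Proposition~\ref{reflexive conditions} --- which is explicitly said to follow from \emph{the same proof} --- confirms the shape of the argument: one localises to \emph{radiating} vertices (those satisfying condition~(1)), and at each such vertex one needs (a) reflexivity of the single-vertex loop algebra and (b) the absence of the identifications described in condition~(3). Your use of the source-grading, cyclic invariant subspaces, and the Ces\`aro sums from Proposition~\ref{cesaro} is the standard machinery and is consistent with this.

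Two corrections are worth flagging. First, the reduction is not uniformly to ``single-vertex $k$-graphs formed by the return paths at the various vertices'': vertices failing condition~(1) are not radiating and are handled by a separate (easier) argument, while only at radiating vertices does one compress to the loop subalgebra and invoke Corollary~6.3. The case split among (1), (2), (3) is precisely tracking which of these routes applies at a given vertex. Second, the theorem only asserts a \emph{sufficient} condition; nothing in the statement claims that a vertex satisfying all of (1)--(3) forces $\mathfrak{L}_\Lambda$ to be non-reflexive, so the worry in your final sentence about pinning the hypothesis down to ``an exact, rather than merely sufficient, condition'' is not part of what needs to be proved.
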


In the next section, we will move on to a further generalization of graphs, and see the corresponding constructions and properties in that setting.

\section{Operator Algebras from Categories of Paths}\label{section4}

Categories of paths were introduced by Spielberg in  \cite{spielberg} as a generalization of graphs and higher rank graphs.

\begin{definition}[\cite{spielberg}, Definition 2.1]
A small category $\Lambda$ with source map $s$ and range map $r$ is called a \emph{category of paths} if, for $\alpha, \beta, \gamma \in \Lambda$,
\begin{itemize}
\item $\alpha \beta = \alpha \gamma$ implies $\beta = \gamma$ (left cancellation)
\item $\beta \alpha = \gamma \alpha$ implies $\beta = \gamma$ (right cancellation)
\item $\alpha \beta = s(\beta)$ implies $\alpha = \beta = s(\beta)$ (no inverses)
\end{itemize}
\end{definition}

\begin{example}
Graphs and higher rank graphs are examples of categories of paths. Note that the factorization property implies that left and right cancellation hold in a higher rank graph.
\end{example}

\begin{example}
The $p$-graphs introduced by Brownlowe, Sims, and Vittadello in \cite{pgraphs} are an example of a category of paths. A $p$-graph is a generalization of a directed graph where paths have a degree in a semigroup $P$ rather than a length in $\mathbb{N}$. See \cite{pgraphs} for more details.
\end{example}

\begin{example}\label{cat path}
Consider the graph
\begin{tikzcd}
    x_1 \arrow[bend right=20]{r}[swap]{b_1} \arrow[bend left=20]{r}{a_1} & x_2 \arrow[bend right=20]{r}[swap]{b_2} \arrow[bend left=20]{r}{a_2} & x_3
\end{tikzcd}
along with the identifications $b_2a_1 = a_2b_1$ and $a_2 a_1 = b_2 b_1$. This forms a category of paths with three vertices ($x_1, x_2, x_3)$ and six non-vertex paths ($a_1, b_1, a_2, b_2, a_2 a_{1}, a_{2} b_{1})$. This is not a higher rank graph, since there is no way to assign degrees to the edges in such a way that the factorization property would be satisfied.
\end{example}

\begin{example}\label{3loop}
Consider the category of paths defined by the graph
\begin{center}
\begin{tikzpicture}[->,>=stealth',shorten >=1pt,auto,node distance=3cm,
                    thick,main node/.style={circle}]

  \node[main node] (1) {$x$};

  \path[->,draw,thick]
        (1) edge [loop left] node {$a$} (1);
    \path[->,draw,thick]
        (1) edge [loop above] node {$b$} (1);
    \path[->,draw,thick]
        (1) edge [loop right] node {$c$} (1);
\end{tikzpicture}
\end{center}
along with the identifications $ab = bc = ca$, $ac = cb = ba$, and $a^2 = b^2 = c^2$. For this category of paths, we can define the length of a path in the normal sense. The identifications imply that for every $k \geq 1$, there are exactly 3 paths of length $k$. For $k \geq 2$, they can be written in the forms: $a^k, ba^{k-1}, ca^{k-1}$.
\end{example}

Our construction of $C^*(\Lambda)$ for a $k$-graph or directed graph $\Lambda$ was heavily based on the degree functor (that was used in the definition of a Cuntz-Krieger $\Lambda$-family). Indeed, for a category of paths, many different degrees can exist, giving different decompositions of the algebra (see Section 9 of \cite{spielberg}). Exel \cite{exel} proposed an idea that a degree functor is not at all necessary for defining the $C^*$-algebra.

For a semigroupoid $(\Lambda, \Lambda^{(2)}, \cdot)$ (see Definition 2.1 of \cite{exel}), a representation $f$ of $\Lambda$ in a unital $C^*$-algebra $B$ was defined in Definition 4.1 of \cite{exel}. Using the notion of tight representations (see Definition 4.5 of \cite{exel}), the universal representation $\tilde{\mathcal O}(\Lambda)$ of a semigroupoid $\Lambda$ was defined in Definition 4.6 of \cite{exel}. The most interesting thing is that any small category is a sort of semigroupoid in several ways, and it turns out that the universal representation $\tilde{\mathcal O}(\Lambda)$ of a $k$-graph, viewed as a semigroupoid, is isomorphic to $C^*(\Lambda)$ as defined in last section (see Theorem 8.7 of \cite{exel}). We saw in Section \ref{section3} that for a $k$-graph $\Lambda$, as an application of the Gauge Invariant Uniqueness Theorem for $k$-graphs, there is isomorphism between $C^*(\Lambda)$ and the $C^*$-algebra of a locally compact groupoid with unit space equal to the infinite path space of $\Lambda$. Exel \cite{exel1} also observed that any semigroupoid $C^*$-algebra can be described as a groupoid $C^*$-algebra.

The work of Exel motivated Spielberg to define the $C^*$-algebra of a category of paths. The idea of Spielberg's construction was based on an analogy from Cuntz and Krieger's works: taking ultrafilters on a Boolean ring generated by a set of paths extending a particular path and defining the $C^*$-algebras via a groupoid (this produces the usual Cuntz-Krieger algebra). Since this process just uses the operations of concatenation and cancellation, this construction can be translated to categories of paths. Spielberg not only found a generalization of $C^*(\Lambda)$ for a $k$-graph $\Lambda$ to categories of paths, he also found generalization of gauge actions and aperiodicity for categories of paths in Section 9 and Section 10 of \cite{spielberg}. We will refer readers to \cite{spielberg} for the technical details and we instead discuss here the extension of free semigroupoid algebras to categories of paths and our proposal to extend results of free semigroupoid algebras associated with $k$-graphs to those of categories of paths. The following material was done in PhD work of the first author and can be accessed from \cite{thesis} and is under preparation in paper form in \cite{unpublished}.

The same construction used for the left regular free semigroupoid algebra from a graph can be easily applied to a category of paths. For a category of paths $\Lambda$, define a Hilbert space $\mathcal{H}_\Lambda$ with basis $\{\xi_\mu\}_{\mu \in \Lambda}$ indexed by the path space of the category of paths, and for each $\mu \in \Lambda$, define partial isometries $L_\mu$ by 
\[ L_\mu(\xi_\nu) = \left\{ \begin{array}{cc} \xi_{\mu\nu} & \text{if $s(\mu) = r(\nu)$} \\ 0 & \text{else} \end{array} \right.,\]
for each $\nu \in \Lambda$.

\begin{definition}
The \emph{left regular free semigroupoid algebra} $\mathfrak{L}_\Lambda$ is the WOT-closed algebra generated by the operators $\{L_\mu\}_{\mu \in \Lambda}$.
\end{definition}

\begin{example}\label{3loop2}(see \cite{thesis}, Lemma 3.4.4)
Let $\Lambda$ be the category of paths from Example \ref{3loop}. As mentioned, this category of paths has exactly 3 paths of length $k$ for every $k \geq 1$. For $k \geq 2$, they can be written in the forms: $a^k, ba^{k-1}, ca^{k-1}$. Consider the Hilbert spaces $\{H_k\}_{k \geq 0}$ where $H_0 = \{x\}$, $H_1 = \{a, b, c\}$, and 
$H_{k} = \{a^k, ba^{k-1}, ca^{k-1}\}$ for $k \geq 2$. Then $I = \sum\limits_{k=0}^\infty P_k$, where $P_k$ is the projection onto $H_k$.

In this matrix decomposition, $L_a$, $L_b$, and $L_c$ are represented by 
\[ L_a = \begin{bmatrix}
0 & 0 & 0 & \dots \\
A_1 & 0  &  0 & \dots \\
0 & A  & 0 & \dots \\
0 & 0 & A & \dots \\
\vdots & \vdots & \vdots & \ddots \end{bmatrix}, \ \ 
 L_b = \begin{bmatrix}
0 & 0 & 0 & \dots \\
B_1 & 0  & 0 & \dots \\
0 & B  & 0 & \dots \\
0 & 0 & B & \dots \\
\vdots & \vdots & \vdots & \ddots \end{bmatrix}, 
L_c = \begin{bmatrix}
0 & 0 & 0 &  \dots \\
C_1 & 0  & 0 &  \dots \\
0 & C  & 0 &  \dots \\
0 & 0 & C &  \dots \\
\vdots & \vdots & \vdots & \ddots \end{bmatrix} \]
where $A_1 = \begin{bmatrix} 1 \\
0\\ 0 \end{bmatrix}, 
B_1 = \begin{bmatrix} 0 \\
1 \\ 
0 \end{bmatrix}$, $C_1 =  \begin{bmatrix} 0 \\
0 \\
1 \end{bmatrix}$, and $A, B$, and $C$ are defined as 
\[ A = \begin{bmatrix} 1 & 0 & 0 \\
0 & 0 & 1 \\
0 & 1 & 0  \end{bmatrix}, B = \begin{bmatrix} 0 & 1 & 0 \\
1 & 0 & 0 \\
0 & 0 & 1 \end{bmatrix}, C =  \begin{bmatrix} 0 & 0 & 1 \\
0 & 1 & 0 \\
1 & 0 & 0 \end{bmatrix}.\]

Notice that this free semigroupoid algebra has a non-zero nilpotent element $T = L_a + \omega L_b + \omega^2 L_c$,
where $\omega$ is a primitive third root of unity. This differs from the higher rank graph case, since left regular free semigroupoid algebras from single-vertex higher rank graphs have no non-zero nilpotents (\cite{thesis}, Proposition 3.4.2).
\end{example}

To study semisimplicity in the category of paths case, it will be helpful to employ the definition of a degree functor from \cite{spielberg} (restricted here to functions mapping into $\mathbb{N}^k)$:

\begin{definition} 
A \emph{degree functor} on $\Lambda$ is a function $\varphi$ from $\Lambda$ to $\mathbb{N}^k$ such that for all $\mu, \nu \in \Lambda$ satisfying $s(\mu) = r(\nu)$:
\[ \varphi(\mu\nu) = \varphi(\mu) + \varphi(\nu).\]
We say the degree functor is \emph{non-degenerate} if $\varphi(\alpha) \neq 0$ when $\alpha \notin \Lambda^0$. Let $|\mu| = |\varphi(\mu)|$.
\end{definition}

Moreover, we propose the following definitions:

\begin{definition}
Let $\Lambda$ be a category of paths with a degree functor. A path $\mu \in \Lambda$ is called \textit{minimal} if for $\nu, \eta \in \Lambda$, $\mu = \nu \eta$ implies $\mu = \nu$ or $\mu = \eta$.
\end{definition}

\begin{definition} Say that a category of paths $\Lambda$ satisfies property (P) if:
\begin{itemize}
\item[(i)] For each vertex $v \in \Lambda^0$, the set of minimal paths in $v\Lambda$ is finite; and
\item[(ii)] If $A \neq 0$ and $A = a_1 L_{w_1} + a_2 L_{w_2} + \dots + a_k L_{w_k}$ where $|w_1| = |w_2| = \dots = |w_k|$, then there is some $\mu \in \Lambda$ such that $L_\mu A$ is not nilpotent.
\end{itemize}
\end{definition}

This condition has two parts: the first is similar to row-finiteness in a graph; the second is a restriction on which elements of the algebra can be nilpotent, which is similar to, but more general than, the requirement that all paths lie on a cycle. The following can be shown:
\begin{theorem} (\cite{thesis}, Theorem 4.1.9) Let $\Lambda$ be a category of paths with a non-degenerate degree functor. Then:
\begin{enumerate}
\item If $\Lambda$ satisfies (P), then $\mathfrak{L}_\Lambda$ is semisimple.
\item If $\mathfrak{L}_\Lambda$ is semisimple, then each path in $\Lambda$ lies on a cycle.
\end{enumerate}
\end{theorem}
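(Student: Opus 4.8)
The plan is to prove the two implications separately; the converse implication is a short structural argument, while the forward implication is the substantial one.

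\medskip

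\noindent\emph{Proof of (2), by contrapositive.} Suppose some path $w\in\Lambda$ does not lie on a cycle, and let $J\subseteq\mathfrak{L}_\Lambda$ be the WOT-closed two-sided ideal generated by $L_w$. Since $\mathfrak{L}_\Lambda$ is generated by the $L_\mu$, $J$ is the WOT-closed linear span of the operators $L_{\mu w\nu}$ for which $\mu w\nu$ is a legal path. The key combinatorial observation is that no such path can be a loop: if $\mu w\nu$ had equal source and range, then $w$ would be a subpath of a loop, hence would lie on a cycle. Consequently, if $L_{\mu_1 w\nu_1}$ and $L_{\mu_2 w\nu_2}$ were composable, then applying left cancellation by $\mu_1$ and right cancellation by $\nu_1$ (in the concatenated loop that would result) again forces $w$ onto a cycle; so $L_{\mu_1 w\nu_1}L_{\mu_2 w\nu_2}=0$ in all cases. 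Because multiplication in $\mathcal B(\mathcal H_\Lambda)$ is separately WOT-continuous, this gives $J^2=0$. Since $L_w\neq 0$, $J$ is a nonzero nilpotent (a fortiori quasinilpotent) ideal, so $J\subseteq\operatorname{rad}(\mathfrak{L}_\Lambda)$ and $\mathfrak{L}_\Lambda$ is not semisimple.

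\medskip

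\noindent\emph{Proof of (1).} Suppose $A\in\operatorname{rad}(\mathfrak{L}_\Lambda)$ with $A\neq 0$, aiming for a contradiction. First I would pass to a homogeneous piece. Grading $\mathcal H_\Lambda$ by $|\mu|$ and conjugating by the gauge unitaries $U_\lambda$ (as in the proof of Proposition \ref{cesaro} with $P_j=E_j$) shows that $\mathfrak{L}_\Lambda$, and hence its radical, is carried into itself by each averaging/Fourier projection $\Phi_j$ of Definition \ref{cesaro sum defn}, so every homogeneous component $\Phi_{-n}(A)$ lies in $\operatorname{rad}(\mathfrak{L}_\Lambda)$. Let $n_0$ be least with $B:=\Phi_{-n_0}(A)\neq 0$. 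If $n_0=0$, then $B=\sum_{v\in\Lambda^0}a_v L_v$ is a nonzero combination of mutually orthogonal projections with pairwise orthogonal ranges, so $\|B^n\|^{1/n}=\|B\|>0$ and $B$ is not quasinilpotent, a contradiction; hence $n_0\geq 1$. Now (P)(i) yields finiteness: a degree-$n_0$ path with a fixed range $v$ factors into at most $n_0$ minimal paths, each drawn from a finite set, so $\{w:|w|=n_0,\ r(w)=v\}$ is finite. Therefore $C:=L_v B$ is a finite homogeneous sum $\sum a_w L_w$ lying in $\operatorname{rad}(\mathfrak{L}_\Lambda)$, and it is nonzero for a suitable $v$. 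Applying (P)(ii) to $C$ produces $\mu\in\Lambda$ with $D:=L_\mu C$ not nilpotent, while $D\in\operatorname{rad}(\mathfrak{L}_\Lambda)$ forces $yD$ to be quasinilpotent for every $y\in\mathfrak{L}_\Lambda$.

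\medskip

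\noindent\emph{The crux and main obstacle.} It remains to show that a non-nilpotent finite homogeneous element of degree $\geq 1$ cannot lie in the radical. Here is the intended route. Since $D=\sum_i a_i L_{u_i}$ (with all $a_i\neq0$) is not nilpotent, a König's-lemma plus pigeonhole argument over the finitely many source/range vertices of the $u_i$ produces a vertex $z$ and a composable product $c=u_{i_1}\cdots u_{i_m}$ of these monomials that is a loop at $z$. Writing $c=\alpha u_{i_m}$ with $\alpha=u_{i_1}\cdots u_{i_{m-1}}$, the element $G:=L_\alpha D L_z\in\operatorname{rad}(\mathfrak{L}_\Lambda)$ is, by left/right cancellation, a nonzero finite sum $\sum_i b_i L_{\delta_i}$ in which the $\delta_i$ are distinct loops of a common length at the single vertex $z$. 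One must finally show $G=0$, by bounding the spectral radius of $G$ restricted to the invariant subspace $\ell^2(z\Lambda z)$ from below. In the graph (1-graph) case this is immediate, since there are no identifications: a composable cyclic monomial persists in every power of $G$, so $\operatorname{spr}(G)>0$ at once. In a general category of paths the distinct monomials of $G^n$ can cancel — as in Example \ref{3loop2}, where $L_a+\omega L_b+\omega^2 L_c$ is nilpotent — and controlling this cancellation is exactly where hypothesis (P)(ii) must be used: it guarantees that total cancellation cannot persist under further creation-operator multiples, so that one can exhibit an element of the radical whose cyclic structure survives and which therefore has strictly positive spectral radius, the desired contradiction. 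Carrying out this last estimate — pinning down precisely which compression of $G$ detects an uncancelled loop term — is the step I expect to demand the most care.
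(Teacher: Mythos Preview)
The paper is a survey and does not itself prove this theorem; it merely quotes it from the first author's thesis. So there is no ``paper's proof'' to compare against, and I can only evaluate your proposal on its own terms.

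\medskip

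\textbf{Part (2).} Your contrapositive argument is correct in substance. One small simplification: you do not need left or right cancellation at all. If $L_{\mu_1 w\nu_1}L_{\mu_2 w\nu_2}\neq 0$ then $\mu_1 w\nu_1\mu_2 w\nu_2$ is a path, hence so is the subpath $w\nu_1\mu_2$, and since $s(\mu_2)=r(w)=r(w\nu_1\mu_2)$ this is already a loop through $w$. The passage from ``monomials square to zero'' to ``$J^2=0$'' via separate WOT-continuity is fine.

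\medskip

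\textbf{Part (1).} The reduction to a finite homogeneous piece is the right first move, but two issues deserve comment.

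First, a technical point you glide over: you assert that each $\Phi_j$ preserves the radical because conjugation by $U_\lambda$ does. Conjugation by $U_\lambda$ is indeed an automorphism of $\mathfrak{L}_\Lambda$ and hence preserves $\operatorname{rad}(\mathfrak{L}_\Lambda)$, but $\Phi_j(A)$ is only a \emph{WOT} average of the $U_\lambda A U_\lambda^{-1}$; the Jacobson radical of a Banach algebra is norm-closed, not a priori WOT-closed, so you owe an argument here (e.g.\ work instead with individual Fourier coefficients $a_w=\langle A\xi_{s(w)},\xi_w\rangle$, or justify WOT-closedness of the radical in this setting).

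Second, and more seriously, you explicitly leave the crux unproved. After invoking (P)(ii) once to obtain $D=L_\mu C$ not nilpotent, you still need to contradict $D\in\operatorname{rad}(\mathfrak{L}_\Lambda)$, i.e.\ to rule out $D$ being quasinilpotent-but-not-nilpotent. Your K\"onig/pigeonhole reduction to an element $G=\sum b_iL_{\delta_i}$ supported on loops at a single vertex is sensible, but you then say only that ``(P)(ii) must be used'' again to defeat cancellation, without indicating how. As stated, (P)(ii) gives you a further left multiple that is not nilpotent, which just reproduces the situation you are already in; it does not by itself force positive spectral radius. What is actually needed is a lemma to the effect that a \emph{finite} homogeneous element of strictly positive degree is quasinilpotent only if it is nilpotent (in your single-vertex reduction this amounts to a finite-dimensional spectral statement about the induced map between the finite spans of successive ``levels''). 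That lemma is the real content of the forward implication, and your proposal does not supply it. Until you close that gap, (1) is not proved.
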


An example of a category of paths that satisfies (P) is Example \ref{3loop2} (\cite{thesis}, Corollary 4.2.4). Therefore, $\mathfrak{L}_\Lambda$ is semisimple for this example.

As for reflexivity, the proof of Theorem 6.4 from \cite{kribs.power.2006} also proves the following proposition. We say $v$ is a \textit{radiating vertex} if for all $\lambda \in \Lambda$, $r(\lambda) = v$ implies $s(\lambda) = v$.

\begin{proposition}\label{reflexive conditions}
Suppose that $\Lambda$ is a category of paths with a non-degenerate degree functor such that each radiating vertex $v$ satisfies
\begin{enumerate}
\item[(a)] for the category of paths $\Lambda'$ consisting of $v$ and all paths $\mu \in \Lambda$ with $r(\mu) = s(\mu) = v$, we have $\mathfrak{L}_{\Lambda'}$ is reflexive; and
\item[(b)] if $\mu_1$ and $\mu_2$ are loops at $v$ and $w_1$ and $w_2$ are paths with source $v$, then $w_1 \mu_1 \neq w_2 \mu_2$.
\end{enumerate}
Then $\mathfrak{L}_\Lambda$ is reflexive.
\end{proposition}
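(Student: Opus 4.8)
The plan is to follow the proof of Theorem 6.4 of \cite{kribs.power.2006}, with the non-degenerate degree functor $\varphi\colon\Lambda\to\mathbb N^k$ playing the role of the degree map of a higher rank graph. Fix $T\in\text{Alg Lat}(\mathfrak L_\Lambda)$; since the containment $\mathfrak L_\Lambda\subseteq\text{Alg Lat}(\mathfrak L_\Lambda)$ is automatic, it suffices to show $T\in\mathfrak L_\Lambda$. The functor $\varphi$ grades $\mathcal H_\Lambda$ by $\mathbb N^k$: with $E_n$ the projection onto $\overline{\operatorname{span}}\{\xi_\mu:\varphi(\mu)=n\}$ (refined into finite-rank blocks if $E_n$ is infinite rank), one has the Fourier components $\Phi_j(T)$ and the Cesàro sums $\Sigma_m(T)$ as in Definition \ref{cesaro sum defn}, and by Proposition \ref{cesaro} the sums $\Sigma_m(T)$ converge to $T$ in the strong operator topology. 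Since $L_w$ is homogeneous of degree $\varphi(w)$, it is therefore enough to show that each Fourier component $\Phi_j(T)$ is a WOT-convergent combination $\sum_{\varphi(w)=j}a_wL_w$ whose coefficients $a_w$ are independent of the right extension of $w$; this reduces reflexivity to a statement about the homogeneous slices of $\text{Alg Lat}(\mathfrak L_\Lambda)$.

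I would then localize the problem. The right regular algebra $\mathfrak R_\Lambda$ commutes with $\mathfrak L_\Lambda$ --- the computation in Section \ref{section2} uses only left and right cancellation --- so the mutually orthogonal source projections $R_x$ ($x\in\Lambda^0$, with $\sum_xR_x=I$) lie in the commutant of $\mathfrak L_\Lambda$ and are in particular reducing for it. Hence $T$ commutes with each $R_x$ and splits as $T=\bigoplus_xR_xTR_x$ over source components. Within one component, the cyclic-vector argument that Kribs and Power use to prove reflexivity of $\mathfrak L_G$ for an ordinary graph \cite{kribs.power.2004} forces the coefficient of $T$ along every word that genuinely leaves one of the vertices it meets --- every $w=\nu\eta$ with $r(\nu)\ne s(\nu)$ --- to agree with that of an element of $\mathfrak L_\Lambda$, because the Fock space is roomy enough for the cyclic invariant subspaces $[\mathfrak L_\Lambda\xi]$ to separate such words. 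The coefficients not pinned down this way are those indexed by words that are loops at a vertex all of whose incoming paths are loops, i.e.\ by loops at a radiating vertex.

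So fix a radiating vertex $v$. Every path with range $v$ is then a loop at $v$, so $L_v\mathcal H_\Lambda$ is (a copy of) the Fock space $\mathcal H_{\Lambda'}$ of the loop category $\Lambda'$ from hypothesis (a), and the part of $\mathcal H_\Lambda$ spanned by words of the form $w\mu$ with $\mu$ a loop at $v$ and $w$ a path with source $v$ decomposes, by condition (b), as a direct sum of copies of $\mathcal H_{\Lambda'}$ indexed by the $w$'s, on which the loop operators act as the corresponding amplification of $\mathfrak L_{\Lambda'}$. Condition (b) --- that there is no identity $w_1\mu_1=w_2\mu_2$ arising from distinct loops $\mu_1,\mu_2$ at $v$ --- is precisely what makes the portion of $\text{Alg Lat}(\mathfrak L_\Lambda)$ supported on words through $v$ sit inside $\text{Alg Lat}$ of that amplification; it is the residue, in the looser axioms of a category of paths, of the rigidity the factorization property provides in \cite{kribs.power.2006}. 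Since $\mathfrak L_{\Lambda'}$ is reflexive by (a), and since an amplification of a reflexive algebra is reflexive, the loop coefficients of $T$ at $v$ must also be those of an element of $\mathfrak L_\Lambda$. Reassembling the vertex contributions through the decomposition of the previous step and recombining the Fourier components by Cesàro summation yields $T\in\mathfrak L_\Lambda$, so $\mathfrak L_\Lambda$ is reflexive.

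The genuine difficulty is the step at the radiating vertices: converting the higher rank graph combinatorics of \cite{kribs.power.2006} into arguments that use only left cancellation, right cancellation, non-degeneracy of $\varphi$, and (b). In particular one must verify that (a) and (b) together exclude every \emph{exotic} operator --- one built from colliding loop-times-path words --- that would otherwise lie in $\text{Alg Lat}(\mathfrak L_\Lambda)\setminus\mathfrak L_\Lambda$; such operators are exactly why reflexivity can fail for a higher rank graph, so the two hypotheses must close off every such possibility at each radiating vertex. A minor technical point, since no finiteness is assumed, is running the $\mathbb N^k$-graded Cesàro machinery of Proposition \ref{cesaro} when the spectral projections $E_n$ have infinite rank, which is handled by passing to a finite-rank refinement compatible with the grading.
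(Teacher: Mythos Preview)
Your proposal is correct and matches the paper's approach exactly: the paper does not give an independent proof but simply asserts that ``the proof of Theorem~6.4 from \cite{kribs.power.2006} also proves the following proposition,'' which is precisely the route you outline. Your sketch in fact supplies considerably more detail than the paper itself --- in particular the localization via the source projections $R_x$, the separation of the loop-at-radiating-vertex coefficients from the rest, and the role of hypothesis (b) in ensuring that the Fock space over a radiating vertex splits as an amplification of $\mathcal H_{\Lambda'}$ --- all of which are faithful to how the Kribs--Power argument runs.
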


This suggests that the place to start with reflexivity is to determine which single-vertex categories of paths have reflexive free semigroupoid algebras. Then we can use that information to analyze the reflexivity of multiple-vertex categories of paths.


\bibliographystyle{acm}
\bibliography{refer.bib}

\end{document}